\documentclass[12pt]{amsart}

\usepackage{amssymb,amsmath}
\usepackage[dvips]{graphicx}
\usepackage{color}

\newcounter{ENUM}
\newcommand{\be}{\begin{enumerate}}
\newcommand{\ee}{\end{enumerate}}
\newcommand{\beas}{\begin{eqnarray*}}
\newcommand{\eeas}{\end{eqnarray*}}
\newcommand{\bea}{\begin{eqnarray}}
\newcommand{\eea}{\end{eqnarray}}
\newcommand{\beq}{\begin{equation}}
\newcommand{\eeq}{\end{equation}}

\newcommand{\st}{\,:\,}

\newcommand{\ca}{\mathcal{A}}

\newcommand{\sn}{\mathfrak{S}_n}

\newcommand{\fraka}{\mathfrak{A}}
\newcommand{\frakan}{\mathfrak{A}_{2n}}
\newcommand{\frakr}{\mathfrak{R}}

\newcommand{\fs}{\mathfrak{S}}

\newcommand{\ehr}{\mathrm{Ehr}}

\newcommand{\rec}{\mathrm{rec}}
\newcommand{\rp}{\mathrm{rp}}
\newcommand{\hatw}{\hat{w}}

\newtheorem{theorem}{Theorem}[section]
\newtheorem{proposition}[theorem]{Proposition}
\newtheorem{lemma}[theorem]{Lemma}
\newtheorem{corollary}[theorem]{Corollary}

\theoremstyle{definition}

\newtheorem{example}[theorem]{Example}

\theoremstyle{remark}

\numberwithin{equation}{section}

\newcommand{\bm}[1]{{\boldsymbol{#1}}}

\def\rr{\mathbb{R}}
\def\zz{\mathbb{Z}}

\def\pp{\mathbb{P}}

\newcommand{\bmx}{\boldsymbol{x}}

\newcommand{\bmy}{\boldsymbol{y}}

\newcommand{\lit}{\mathfrak{R}}
\newcommand{\sfname}{sprout}

\def\cM{{\mathcal M}}
\def\inc{{\mathsf I}{\mathsf n}{\mathsf c}}
\def\ASC{{\mathbf A}{\mathbf S}{\mathbf C}}
\def\DES{{\mathbf D}{\mathbf E}{\mathbf S}}

\begin{document}
\subjclass[2010]{05E05, 05A05}
 \keywords{sprout sequence, symmetric function, Edrei-Thoma theorem,
   alternating permutation}

\date{\today}

\author[T. Amdeberhan, J. Shareshian, R. P. Stanley]{Tewodros Amdeberhan\\ John Shareshian\\ Richard P. Stanley}
\address{Tulane University\\
Department of Mathematics\\
New Orleans, LA 70118, USA}
\email{tamdeber@tulane.edu}

\address{Washington University in St Louis\\
Department of Mathematics\\
St. Louis, MO 63130, USA}
\email{jshareshian@wustl.edu}

\address{University of Miami \\
Department of Mathematics\\
Coral Gables, FL 33124, USA}
\email{rstan@math.mit.edu}

\title[Sprout Symmetric Functions]{Sprout Symmetric Functions\\ Part 1}

\maketitle

\begin{abstract}

  A \emph{sprout sequence} is a sequence
  $\frakr=(R_0=1,R_1,R_2,\dots)$ of symmetric functions in the
  variables $\bmx=(x_1,x_2,\dots)$ over a field $K$ generated from a
  power series $F(t)=1+a_1t+a_2t^2+\cdots$ by the rule $\sum_{n\geq
    0}R_nt^n = \prod_{i\geq 1} F(x_it)$. The power series $F(t)$ is
  called the \emph{seed} of $\frakr$. This concept originated in the
  work of Littlewood and Richardson (though not with the name ``sprout
  sequence''), and numerous examples of sprout sequences have appeared
  in the literature. They are related to chromatic Tutte polynomials
  of complete graphs and complete hypergraphs, binomial posets, upper
  homogeneous (upho) posets, topological genera, etc.

  We first develop the basic theory of sprout sequences and then look
  at the special case $F(t)=\sec(\sqrt{t})$. We give five
  characterizations of sprout sequences and consider the expansion
  of sprout symmetric functions in terms of well-known symmetric
  function bases. The Schur positivity, elementary symmetric function
  positivity, and complete homogeneous symmetric function positivity
  of $R_n$ for all $n$ are completely characterized using the
  Edrei-Thoma theorem from the theory of total positivity.

  The seed $F(t)=\sec(\sqrt{t})$ is especially interesting. The
  expansion of $R_n$ in the power sum or monomial basis is related to
  alternating permutations. The Schur function expansion is related to
  standard Young skew tableaux. The expansion in terms of the complete
  symmetric functions has nonnegative integer coefficients, but we
  don't know a combinatorial interpretation. Finally we give a formula
  for $R_n$ as a sum of chromatic symmetric functions of interval
  orders. 

\end{abstract}  

\newpage

\section{Introduction}
We assume knowledge of symmetric functions as may be found in
Stanley \cite[Ch.~7]{ec2}. Let $K$ be a field
of characteristic 0, and let $\Lambda_K$ denote the $K$-algebra of
symmetric functions in the indeterminates
$\bmx=(x_1,x_2,\dots)$. Write $\Lambda_K^n$ for the space of those
elements of $\Lambda_K$ that are homogeneous of degree $n$. For any
$K$-basis $\{b_\lambda\}$ of $\Lambda_K$ and any $f\in\Lambda_K$
let $[b_\lambda]f$ denote the coefficient of $b_\lambda$ in the
$b$-expansion of $f$. We will deal with $K$-bases whose elements are
homogeneous. The basis elements $b_\lambda$ for $\Lambda_K^n$ are
indexed by partitions $\lambda$ of $n$, denoted $\lambda\vdash n$.

\smallskip
Now let $F(t)=1+a_1x+a_2x^2+\cdots$ be a formal
power series over $K$ with constant term 1. Define a sequence
$\lit=(R_0(\bmx), R_1(\bmx), \dots)$ of symmetric functions
$R_n(\bmx)$ by
  \beq \sum_{n\geq 0}R_n(\bmx)t^n = \prod_{i\geq 1} F(x_i t).
    \label{eq:rndef} \eeq
Note that $R_n(\bmx)$ is well-defined formally and is homogeneous of
degree $n$. Moreover, $R_0(\bmx)=1$ and
$R_1(\bmx)=a_1(x_1+x_2+\cdots)= a_1p_1(\bmx)$. We call $\lit$ the
\emph{\sfname\ sequence} of symmetric
functions generated by the \emph{seed} $F(t)$. We also call the
$R_n(\bmx)$'s \emph{\sfname\ symmetric functions} (with respect to the
seed $F(t)$). We will always use the notation \eqref{eq:rndef} in
regard to a \sfname\ sequence $\lit$, as well as the following:
  \bea F(t) & = & \sum_{n\geq 0} a_nt^n,\ a_0=1\nonumber\\
     \log F(t) & = & \sum_{n\geq 1} b_n\frac{t^n}{n}
       \label{eq:logft}\\
        \ca(t) & = & \sum_{n\geq 0}R_n(\bmx)t^n. \nonumber\eea
(The reason for the $n$ in the denominator in the expansion of $\log
F(t)$ is explained by Theorem~\ref{thm:5cond}(d).)        

\smallskip
The concept of \sfname\ symmetric functions is due to
D.\,E.\ Littlewood and A.\,R.\ Richardson \cite{lit1}\cite{lit2}
(repeated in \cite[pp.~99--100 and Ch.~VII]{lit}), though stated in a 
different form. Namely, Littlewood and Richardson consider an
arbitrary series $F(t) 
= 1+a_1t+\cdots$. They define the \emph{Schur function $s_\lambda^F$
  of the series} $F$, where $\lambda\vdash n$,  to be (in our notation)
the scalar product $\langle R_n,s_\lambda\rangle$, i.e. (since the
Schur functions are self-dual), the coefficient of $s_\lambda$ in the
Schur expansion of $R_n$. 

\smallskip
For instance, if $F(t)=\prod_j(1-y_jt)^{-1}$ then by the Cauchy
identity \cite[Thm.~7.12.1]{ec2} we have 
 \bea F(x_1t)F(x_2t)\cdots & = & \prod_{i,j}(1-x_iy_jt)^{-1}\\
    & = & \sum_{n\geq 0}\sum_{\lambda\vdash
      n}s_\lambda(\bmx)s_\lambda(\bmy)t^n, \label{eq:cauchy} \eea
whence $s_\lambda^F=s_\lambda(\bmy)$. 
In particular, if $F_1(t)=\prod_{i=1}^n(1-q^{i-1}t)^{-1}$ then
  \beas s_\lambda^{F_1} & = & s_\lambda(1,q,\dots,q^{i-1})\\ & = &
  q^{b(\lambda)}\prod_{u\in\lambda}\frac{1-q^{n+c(u)}}{1-q^{h(u)}},
  \eeas
where $c(u)$ denotes the content and $h(u)$ the hook length of the
square $u$ (in the Young diagram of $\lambda$) and 
$b(\lambda)=\sum (i-1)\lambda_i$ (see  \cite[Thm.~7.21.2]{ec2}). 
This formula was stated in a more complicated way by 
Littlewood and Richardson since hook lengths had yet to be
discovered. From this formula Littlewood and Richardson obtain what is
perhaps their nicest result in this area, namely, let
  $$ F_2(t) = \prod_{i\geq 0}\frac{1-yq^it}{1-zq^it}. $$
Then
  $$ s_\lambda^{F_2} = \prod_{u\in\lambda} \frac{y-zq^{c(u)}}
    {1-q^{h(u)}}. $$
See also \cite[Exer.~7.91(c)]{ec2}.

\smallskip
We now give some very easy or already known examples of \sfname\
symmetric functions.

\begin{example} \label{ex:easy}
\be\item[(a)] Let $F(t)=1+t$. Then directly from the definitions we
get $R_n=e_n$. Similarly if $F(t)=(1-t)^{-1}$ then $R_n=h_n$ . 

\smallskip
 \item[(b)]  Let $F(t)=\frac{1+t}{1-t}$. Then
   \beas \ca(t) & = & \prod_i \frac{1+x_it}{1-x_it}\\ & = &
   \left( \sum_{n\geq 0}e_nt^n\right)\left( \sum_{n\geq 0}
     h_nt^n\right), \eeas
     whence
     $$ R_n =\sum_{k=0}^n e_kh_{n-k}=Q_n=2P_n,\ n\geq 1, $$
     where $Q_n$ is
   the \emph{Schur $Q$-function} and $P_n$ is the \emph{Schur
     $P$-function} indexed by the partition $(n)$
   \cite[\S{III.8}]{macd}.   

\smallskip
 \item[(c)] Let $F(t)=e^t$. Then
   $$ \prod_i F(x_it)=e^{t\sum x_i} = \sum_{n\geq
     0}p_1^n\frac{t^n}{n!}, $$
  whence $R_n=\frac{p_1^n}{n!}$.

\smallskip
  \item[(d)] Let $T_n(\bmx;v)$ denote the symmetric function
    generalization 
    of the Tutte polynomial of the complete graph $K_n$, as defined in
    \cite[{\S}3.3]{rs:garsia}. From equation~(14) of this reference it
    follows that the sequence
    $\left(1,\frac{T_1(\bmx;v)}{1!},
    \frac{T_2(\bmx;v)}{2!},\dots\right)$ is a \sfname\ sequence 
    with seed
      $\sum_{n\geq 0} (1+v)^{\binom n2}\frac{t^n}{n!}$.

\smallskip
  \item[(e)] Let $[d]=\{1,2,\dots,d\}$. Define
    $$ X_{\mathcal{H}_{d,k}}(\bmx) = \sum_\kappa x_{\kappa(1)}
       x_{\kappa(2)}\cdots, $$
    summed over all maps $\kappa\colon [d]\to\pp$ such that
    $\#\kappa^{-1}(i)<k$ for all $i$. $X_{\mathcal{H}_{d,k}}(\bmx)$ is
    the chromatic symmetric function of the hypergraph
    $\mathcal{H}_{d,k}$ consisting of all $k$-element subsets of
    $[d]$. From \cite[eqn.~(21)]{rs:garsia} it follows that
    the sequence  
	    $$ \left(1,\frac{X_{\mathcal{H}_{1,k}}(\bmx)}{1!},
    \frac{X_{\mathcal{H}_{2,k}}(\bmx)}{2!},\dots\right) $$ 
       is a \sfname\ sequence with seed $\sum_{j=0}^{k-1}
       \frac{t^j}{j!}$. A common generalization of this item and the
       previous one is given by the last displayed equation in
       \cite{rs:garsia}. 

\smallskip
     \item[(f)] The sprout sequence with seed
       $$ F(t)=\frac{\frac 12\sqrt{t}}{\sinh(\frac 12\sqrt{t})} $$
     is essentially the $\hat{A}$-genus of spin manifolds. More
     precisely, in the paper \cite[p.~4]{hitchin} of Hitchin,
     replacing $p_i$ in $\hat{A}_i$ by the elementary symmetric
     function $e_i$  yields this sprout sequence. A follow-up paper
     \cite{a-g-o} by Amdeberhan, Griffin, and Ono includes a further
     example of this nature, namely, Hirzebruch's $L$-genus is
     essentially the sprout sequence with seed
       $$ F(t) = \frac{\sqrt{t}}{\tanh(\sqrt{t})}. $$
     For the related seed $F(t)=\sec(\sqrt{t})$, see
     Sections~\ref{sec:secsqrtt}--\ref{sec:chrosym}  below.

\smallskip
      \item[(g)] Let $P$ be a binomial poset with factorial function
     $B(n)$ \cite{rs:bp}\cite[{\S\S}3.18--3.19]{ec1}. Let $\ehr_n$
        denote the Ehrenborg quasisymmetric function
     \cite[Def.~4.1]{ehr}\cite[Exer.~7.48]{ec2} of an $n$-interval of
     $P$. Then $\ehr_n$ is in fact a symmetric function, and the sequence
     $$ \frakr=\left(1,\frac{\ehr_1}{B(1)},
        \frac{\ehr_2}{B(2)},\dots\right) $$
     is a sprout sequence with seed $\sum_{n\geq 0} \frac{t^n}{B(n)}$.
     Some examples will be given in Part~2 of this paper.

\smallskip
   \item[(h)] Let $P$ be an upho (upper homogeneous) poset, i.e., a
     graded poset with finitely many elements of each rank such that
     for all $s\in P$, the principal filter $V_s=\{ u\in P\st u\geq
     s\}$ is isomorphic to $P$. Let $R_n$ be the homogeneous part of
     degree $n$ of the Ehrenborg quasisymmetric function of $P$. Then
     \cite[Lemma~2.2]{gao}\cite[(3.1)]{rs:rat} $R_n$ is in fact a
     symmetric function, and the sequence
       $(R_0,R_1,\dots)$ is a sprout 
       sequence whose seed is the rank-generating function of $P$.

\smallskip
  \item[(i)] Let $F(t)=1+\sum_{n\geq 1}a_n\frac{t^n}{n!}$, where $a_n$
    counts structures on an $n$-element set that are a disjoint union
    of their connected components (such as posets and simple graphs)
    so that we are in the situation of the exponential formula
    \cite[Cor.~5.1.6]{ec2}. Let $\lambda\vdash n$. Then
    $n!\,[p_{\lambda}]R_n$ 
    is equal to the number of structures on an $n$-set such that the
    connected components have sizes $\lambda_1,\lambda_2,\dots$.
  \ee

  \smallskip
In the next section we give three basic properties of
\sfname\ sequences, namely, five equivalent conditions to be a
\sfname\ sequence, the behavior of \sfname\ sequences under the
automorphism $\omega$, and conditions for $s,e$ and
$h$-positivity. The conditions for $s,e,h$-positivity deal with the
following question: if we expand $R_n$ in terms of the bases
$s_\lambda$ (Schur functions), $e_\lambda$ (elementary symmetric
functions), and $h_\lambda$ (complete symmetric functions), then when
are the coefficients nonnegative for all $n$? The proof of the
conditions for $e$ and $h$-positivity is due to Vince Vatter. In
subsequent sections we discuss an example of a \sfname\ sequence
$(A_0, A_1, \dots)$ that arose originally from the problem of
expressing a certain theta function of Ramanujan in terms of
Eisenstein series. A continuation of the present paper is devoted to
generalizations of the \sfname\ sequence $(A_0,A_1,\dots)$.

\end{example}

\section{Basic properties}
We begin with five characterizations of \sfname\ sequences, including
the original definition. Condition (e) arose from a conversation with
Jesse Kim.

\begin{theorem} \label{thm:5cond}
Let $\mathfrak{R}=(R_0(\bmx)=1, R_1(\bmx), R_2(\bmx),\dots)$ be a
sequence of symmetric functions, where $R_n(\bmx)$ is homogeneous of
degree~$n$. Let $\ca(t) =\sum_{n\geq 0}R_n t^n$. The following five
conditions are equivalent.  
 \begin{enumerate}
   \item[(a)] $\mathfrak{R}$ is a \sfname\ sequence.
   \item[(b)] There exist elements $b_1,b_2,\dots\in K$ such that
     \beq \log \ca(t) = \sum b_n p_n\frac{t^n}{n}. \label{eq:loggt}
     \eeq
     Moreover, $\log F(t) = \sum
       b_n\frac{t^n}n$. 
   \item[(c)] There exist elements $a_0=1,a_1,a_2,\dots\in K$ such
     that for all $n\geq 1$,
     \beq R_n(\bmx) = \sum_{\lambda\vdash n}
       a_{\lambda_1}a_{\lambda_2}\cdots
       m_\lambda(\bmx). \label{eq:m} \eeq
     Moreover, $F(t)=\sum_{n\geq 0} a_nt^n$.
   \item[(d)] There exist elements $b_0=1, b_1,b_2,\dots \in K$ such
     that for all $n\geq 1$,
     \beq R_n(\bmx) = \sum_{\lambda\vdash n} z_\lambda^{-1}
     b_{\lambda_1}b_{\lambda_2}\cdots p_\lambda(\bmx). \label{eq:p}
      \eeq
     Moreover, $\log F(t) = \sum b_n\frac{t^n}{n}$.
   \item[(e)] For all $f,g\in \hat{\Lambda}_K$ (the set of all
     symmetric formal power series over $K$ in the indeterminates
     $\bmx$) we have
        $$ \ca(t)*(fg)=(\ca(t)*f)(\ca(t)*g), $$
     where $*$ denotes internal (or Kronecker) product
     \cite[Exer.~7.78]{ec2}. Equivalently, the map $\hat{\Lambda}_K\to
     \hat{\Lambda}_K$ defined by $f\mapsto f*\ca(t)$ is an algebra
     homomorphism.  \ee
 \end{theorem}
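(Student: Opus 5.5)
The plan is to treat (a) as the hub. Conditions (b), (c) and (d) come out of short formal computations with the product $\prod_i F(x_it)$; condition (e) is handled afterwards through the power-sum basis.

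For (a)$\Rightarrow$(b), take logarithms in \eqref{eq:rndef}. Using \eqref{eq:logft},
\[ \log \ca(t)=\sum_i \log F(x_it)=\sum_i\sum_{n\geq 1} b_n\frac{x_i^nt^n}{n}=\sum_{n\ge1} b_np_n\frac{t^n}{n}. \]
For the converse, suppose $\log\ca(t)=\sum b_np_nt^n/n$ for some $b_n\in K$. Define $F$ by $\log F(t)=\sum b_nt^n/n$. The same computation run backwards gives $\ca(t)=\prod_iF(x_it)$, so $\frakr$ is a \sfname\ sequence with seed $F$. The seed is determined uniquely, since setting $x_1=1$ and $x_i=0$ for $i\ge2$ recovers $F(t)=\ca(t)|_{x=(1,0,0,\dots)}$. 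This also gives the ``moreover'' clauses.

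For (a)$\Leftrightarrow$(c), expand the product directly:
\[ \prod_i\Bigl(\sum_k a_kx_i^kt^k\Bigr)=\sum_{\alpha}a_{\alpha_1}a_{\alpha_2}\cdots x^\alpha t^{|\alpha|}, \]
summed over weak compositions $\alpha$. Grouping the monomials by the sorted exponent vector $\lambda$ gives \eqref{eq:m}. Conversely, if \eqref{eq:m} holds for some $a_n$, the same identity read backwards shows that $\ca(t)=\prod_i F(x_it)$ with $F=\sum a_nt^n$.

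For (b)$\Leftrightarrow$(d), exponentiate (b):
\[ \exp\Bigl(\sum_n b_np_nt^n/n\Bigr)=\prod_n\sum_{m_n\ge0}\frac{(b_np_n)^{m_n}t^{nm_n}}{n^{m_n}m_n!}. \]
Collecting the coefficient of $t^N$ gives $\sum_{\lambda\vdash N}z_\lambda^{-1}b_\lambda p_\lambda$, where $z_\lambda=\prod n^{m_n}m_n!$. For the converse, note that \eqref{eq:p} for all $n$ says exactly that $\ca(t)$ equals this exponential. Taking logarithms then returns (b).

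For (e), work in the power-sum basis with $p_\lambda*p_\mu=\delta_{\lambda\mu}z_\lambda p_\lambda$. For (d)$\Rightarrow$(e), use (d) to compute
\[ \ca(t)*p_\lambda=z_\lambda^{-1}b_\lambda z_\lambda p_\lambda t^{|\lambda|}=b_\lambda t^{|\lambda|}p_\lambda. \]
Since $b_\lambda b_\mu=b_{\lambda\cup\mu}$, the map $p_\lambda\mapsto b_\lambda t^{|\lambda|}p_\lambda$ is multiplicative on the basis. By linearity and continuity in the formal topology, it is an algebra homomorphism on $\hat\Lambda_K$.

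The converse (e)$\Rightarrow$(d) is the step needing the most care. Write $R_n=\sum_{\lambda\vdash n}c_\lambda p_\lambda$; then $\ca(t)*p_\lambda=c_\lambda z_\lambda t^{|\lambda|}p_\lambda$. Set $\beta_\lambda=c_\lambda z_\lambda$. Multiplicativity applied to $p_\lambda p_\mu=p_{\lambda\cup\mu}$ gives $\beta_\lambda\beta_\mu=\beta_{\lambda\cup\mu}$; comparing coefficients is legitimate because the $p_\lambda$ are linearly independent. Hence $\beta_\lambda=\prod_i\beta_{(\lambda_i)}$. Put $b_n=\beta_{(n)}$. The only remaining subtlety is the empty partition: we need $\beta_\emptyset=1$, and this holds since $R_0=1$ (equivalently, a homomorphism sends $1$ to $1$). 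This yields \eqref{eq:p}, which closes the cycle of equivalences.
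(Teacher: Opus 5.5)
Your proof is correct and follows the paper's route: (a)$\Leftrightarrow$(b) by taking logarithms, (a)$\Leftrightarrow$(c) by expanding the product, (b)$\Leftrightarrow$(d) by exponentiating, and (d)$\Leftrightarrow$(e) via $p_\lambda*p_\mu=\delta_{\lambda\mu}z_\lambda p_\lambda$. For (e)$\Rightarrow$(d) the paper only says ``the steps are reversible,'' whereas you derive the multiplicativity $\beta_{\lambda\cup\mu}=\beta_\lambda\beta_\mu$ explicitly and also note that the seed is unique; both are useful added detail.
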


 \begin{proof}
   (a) $\Leftrightarrow$ (b). Assume (a). We have
   \beas \log \ca(t) & = & \sum_i \log F(x_it)\\ & = &
     \sum_i \sum_{n\geq 1} b_nx_i^n\frac{t^n}{n}\\ & = &
     \sum_{n\geq 1} b_np_n\frac{t^n}{n}, \eeas
   so (b) holds. The steps are reversible, so (b)
   implies (a). 
   
\smallskip
   (a) $\Leftrightarrow$ (c). Assume (a). Expanding $\prod_i F(x_it)$
   gives (c). The steps are reversible, so (c) implies (a). 

\smallskip
   (b) $\Leftrightarrow$ (d). Assume (b). Thus
   \beas \ca(t) & = & \exp \left( \sum_{n\geq 1}
     b_np_n\frac{t^n}{n}\right) \\ & = & 
   \prod_n \exp \left(b_n p_n \frac{t^n}{n}\right).\eeas
    Expand each exponential and multiply to get (d). The steps are
    reversible, so (d) implies (b). 

    \smallskip
    (d) $\Leftrightarrow$ (e). Assume (d). By bilinearity of $*$ it
    suffices to prove (e) for $f=p_\lambda$ and $g=p_\mu$.
    From the formula \cite[Exer.~7.78(d)]{ec2} $p_\lambda*p_\mu=
    \delta_{\lambda,\mu}z_\lambda p_\lambda$ it follows that
    \beas [p_\nu]\ca(t)*p_\lambda & = & \delta_{\lambda,\nu}
    b_{\lambda_1}b_{\lambda_2}\cdots\\ {[p_\nu]}
    \ca(t)*p_\mu & = & \delta_{\mu,\nu} b_{\mu_1}b_{\mu_2}\cdots\\
    {[p_\nu]}\ca(t)*p_\lambda p_\mu & = & \delta_{\lambda\cup\mu,\nu} 
     b_{\lambda_1}b_{\lambda_2}\cdots 
      b_{\mu_1}b_{\mu_2}\cdots.
      \eeas
      It follows that (d) $\Rightarrow$ (e). The steps are reversible,
      so (e) implies (d).
  \end{proof}

Define the \emph{dimension} $\dim f$ of $f\in\Lambda_K^n$ by $\dim f
=\langle f,p_1^n\rangle$. If $f$ is the Frobenius characteristic
$\mathrm{ch}\,\chi$ of a character $\chi$ of the symmetric group
$\sn$, then $\dim f = \dim \chi$.

\begin{corollary}
  Preserve the notation of Theorem~\ref{thm:5cond}. Then
  $\dim R_n = b_1^n=a_1^n$.
\end{corollary}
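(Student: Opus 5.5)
We plan to compute $\dim R_n=\langle R_n,p_1^n\rangle$ directly from the power sum expansion in Theorem~\ref{thm:5cond}(d). The tool is the orthogonality relation $\langle p_\lambda,p_\mu\rangle=\delta_{\lambda\mu}z_\lambda$ for the Hall inner product.

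Pairing \eqref{eq:p} with $p_1^n=p_{(1^n)}$ kills every term except $\lambda=(1^n)$. That term contributes
$$z_{(1^n)}^{-1}\,b_1^n\,z_{(1^n)}=b_1^n,$$
so $\dim R_n=b_1^n$.

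For the second equality we only need $a_1=b_1$. Compare the coefficients of $t$ in $\log F(t)=\sum_{n\geq 1} b_n t^n/n$. Since $F(t)=1+a_1t+O(t^2)$, we have $\log F(t)=a_1t+O(t^2)$, which gives $b_1=a_1$.

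As a cross-check, one could use the monomial expansion \eqref{eq:m} together with $\langle m_\lambda,p_1^n\rangle=\langle m_\lambda,h_1^n\rangle=[x_1x_2\cdots x_n]$-type duality. Only $\lambda=(1^n)$ contributes there as well, since $\langle m_\lambda,h_{1^n}\rangle=\delta_{\lambda,(1^n)}$, and this gives $a_1^n$ directly.

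There is no real obstacle. The only care needed is recalling that $z_{(1^n)}=n!$ cancels exactly with the normalization of $\langle p_1^n,p_1^n\rangle$.
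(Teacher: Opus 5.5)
Your proposal is correct and is essentially the paper's proof: pair \eqref{eq:p} with $p_1^n$ and use $\langle p_\lambda,p_\mu\rangle=\delta_{\lambda\mu}z_\lambda$ with $z_{1^n}=n!$ to obtain $b_1^n$. You also make explicit the step $a_1=b_1$ (the linear term of $\log F(t)$), which the paper leaves implicit. Your cross-check via \eqref{eq:m} and the duality $\langle m_\lambda,h_{1^n}\rangle=\delta_{\lambda,(1^n)}$ gives $a_1^n$ directly and is also valid.
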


\begin{proof}
In equation~\eqref{eq:p} take the scalar product with $p_1^n$ and use
  $z_{1^n}=\langle p_1^n,p_1^n\rangle= n!$.
\end{proof}

Note that by similar reasoning, if $n=dm$ then $\langle
R_n,p_d^m\rangle= b_d^m$.

\smallskip
From Theorem~\ref{thm:5cond}(c) we can give a formula for the
expansion of $R_n$ in terms of any homogeneous basis for
$\Lambda_K$. Let $F(t)=\sum a_nt^n$ be a seed. Let $\varphi\colon
\Lambda_K \to K$ be the $K$-algebra homomorphism defined by
$\varphi(h_n)= a_n$.

\begin{theorem} \label{thm:varphi}
Let $B=\{b_\lambda\}$ be a homogeneous $K$-basis for $\Lambda_K$ such
that if $\lambda\vdash n$, then $\deg b_\lambda=n$. Let
$\{b_\lambda^*\}$ be the basis dual to $B$, so $\langle
b_\lambda,b^*_\mu \rangle=\delta_{\lambda\mu}$. Then
  $$ R_n = \sum_{\lambda\vdash n}\varphi(b^*_\lambda)b_\lambda. $$
\end{theorem}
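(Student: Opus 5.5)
The plan is to reduce the statement to the single pair of dual bases $\{m_\lambda\},\{h_\lambda\}$, where it is exactly Theorem~\ref{thm:5cond}(c), and then transfer it to an arbitrary homogeneous basis by linear algebra. For any homogeneous basis $\{b_\lambda\}$ of $\Lambda_K^n$ with dual basis $\{b^*_\lambda\}$, every $f\in\Lambda_K^n$ satisfies $f=\sum_{\lambda\vdash n}\langle f,b^*_\lambda\rangle b_\lambda$. It therefore suffices to prove the basis-free identity
$$ \langle R_n, g\rangle = \varphi(g) \quad\text{for all } g\in\Lambda_K^n, $$
and then substitute $g=b^*_\lambda$.

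Both sides of this identity are $K$-linear in $g$, so it is enough to check it on the basis $\{h_\mu\}_{\mu\vdash n}$. By Theorem~\ref{thm:5cond}(c),
$$ R_n=\sum_{\lambda\vdash n}a_{\lambda_1}a_{\lambda_2}\cdots m_\lambda. $$
Since $\langle m_\lambda,h_\mu\rangle=\delta_{\lambda\mu}$, this gives $\langle R_n,h_\mu\rangle=a_{\mu_1}a_{\mu_2}\cdots$. On the other side, $\varphi$ is a $K$-algebra homomorphism with $\varphi(h_k)=a_k$, so $\varphi(h_\mu)=\prod_i\varphi(h_{\mu_i})=a_{\mu_1}a_{\mu_2}\cdots$. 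The two sides agree, which proves the identity.

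Finally, the dual basis $\{b^*_\lambda\}_{\lambda\vdash n}$ is homogeneous of degree $n$. This holds because the Hall inner product pairs distinct degrees trivially and is nondegenerate on each $\Lambda_K^n$. So the identity applies to $g=b^*_\lambda$, and expanding $R_n$ in the basis $\{b_\lambda\}$ gives
$$ R_n=\sum_{\lambda\vdash n}\varphi(b^*_\lambda)\,b_\lambda. $$

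The only point needing care is this homogeneity and degree bookkeeping: the formula must be read inside $\Lambda_K^n$, and $\varphi$ is evaluated only on degree-$n$ elements. I do not expect any real obstacle, because the substantive input is Theorem~\ref{thm:5cond}(c) together with the duality $\langle m_\lambda,h_\mu\rangle=\delta_{\lambda\mu}$.
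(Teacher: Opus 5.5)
Your proof is correct and is essentially the paper's argument: both rest on Theorem~\ref{thm:5cond}(c) together with the duality of $\{m_\lambda\}$ and $\{h_\lambda\}$. The paper passes to a second alphabet $\bmy$ and applies $\varphi$ to the basis-independent kernel $\sum_{\lambda\vdash n}h_\lambda(\bmy)m_\lambda(\bmx)=\sum_{\lambda\vdash n}b^*_\lambda(\bmy)b_\lambda(\bmx)$ \cite[Lemma~7.9.2]{ec2}, whereas you state the equivalent fact $\langle R_n,g\rangle=\varphi(g)$ for $g\in\Lambda_K^n$ directly, which is slightly more self-contained and is the form in which the result is actually used in Corollary~\ref{cor:schur}.
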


\begin{proof}
Let $\bmy=(y_1,y_2,\dots)$ be a new set of indeterminates, and let
$\varphi$ act on $\bmy$-variables only. Then by
Theorem~\ref{thm:5cond}(c) we have
  \beas R_n(\bmx) & = & \sum_{\lambda\vdash n}
  a_{\lambda_1}a_{\lambda_2}\cdots m_\lambda(\bmx)\\ & = &
    \sum_{\lambda\vdash n} \varphi(h_\lambda(\bmy))m_\lambda(\bmx).
    \eeas
Since $\{m_\lambda\}$ and $\{h_\mu\}$ are dual bases
\cite[eqn.~(7.30)]{ec2}, it follows from \cite[Lemma~7.9.2]{ec2} that
  $$ \sum_{\lambda\vdash n}h_\lambda(\bmy)m_\lambda(\bmx) =
   \sum_{\lambda\vdash n}b^*_\lambda(\bmy)b_\lambda(\bmx). $$
Applying $\varphi$ (acting on the $\bmy$-variables) completes the
proof. 
\end{proof}  

\begin{corollary} \label{cor:schur}
Let $\frakr=(R_0,R_1,\dots)$ be a sprout sequence with seed $F(t)=\sum
a_it^i$, and let $\lambda\vdash n$. Then the coefficient of
$s_\lambda$ in $R_n$ is given by the determinant
   \beq \langle R_n,s_\lambda\rangle =
   \det\left[a_{\lambda_i-i+j}\right]_{i,j=1}^{\ell(\lambda)}, \label{eq:sbasis} 
   \eeq
where $\ell(\lambda)$ denotes the length (number of parts) of
$\lambda$.    
\end{corollary}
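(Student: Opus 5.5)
The plan is to apply Theorem~\ref{thm:varphi} with the Schur basis $B=\{s_\lambda\}$. The Schur functions are self-dual, $\langle s_\lambda,s_\mu\rangle=\delta_{\lambda\mu}$, so $b^*_\lambda=s_\lambda$. Theorem~\ref{thm:varphi} then gives
$$ R_n=\sum_{\lambda\vdash n}\varphi(s_\lambda)\,s_\lambda, $$
and hence $\langle R_n,s_\lambda\rangle=\varphi(s_\lambda)$. Everything therefore reduces to computing $\varphi(s_\lambda)$, where $\varphi$ is the $K$-algebra homomorphism with $\varphi(h_n)=a_n$.

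For this I will use the Jacobi--Trudi identity \cite[Thm.~7.16.1]{ec2}:
$$ s_\lambda=\det\left[h_{\lambda_i-i+j}\right]_{i,j=1}^{\ell(\lambda)}, $$
with the standard conventions $h_0=1$ and $h_k=0$ for $k<0$. A determinant is a polynomial in its entries and $\varphi$ is a ring homomorphism, so $\varphi$ passes inside:
$$ \varphi(s_\lambda)=\det\left[\varphi(h_{\lambda_i-i+j})\right]=\det\left[a_{\lambda_i-i+j}\right]_{i,j=1}^{\ell(\lambda)}. $$

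The only point needing care is the boundary convention. Since $\varphi(1)=1$, we have $\varphi(h_0)=1=a_0$. Entries with negative index must be read as $a_k=0$ for $k<0$, which matches $h_k=0$. With $a_k$ understood this way, the formula~\eqref{eq:sbasis} follows, and I do not expect any genuine obstacle beyond stating this convention.
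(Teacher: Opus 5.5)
Your proposal is correct and is essentially the paper's own proof: specialize Theorem~\ref{thm:varphi} to the self-dual Schur basis so that $\langle R_n,s_\lambda\rangle=\varphi(s_\lambda)$, then apply $\varphi$ to the Jacobi--Trudi determinant $s_\lambda=\det[h_{\lambda_i-i+j}]$. Your explicit handling of the conventions $a_0=1$ and $a_k=0$ for $k<0$ is a small detail the paper leaves implicit.
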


\begin{proof}
Apply $\varphi$ to the Jacobi-Trudi identity $s_\lambda=
\det[h_{\lambda_i-i+j}]$ and note that the Schur functions form a
self-dual basis. 
\end{proof}

Theorem~\ref{thm:5cond} yields a simple proof of the following
generalization of Example~\ref{ex:easy}(d). If $\kappa\colon [d]\to
\pp$, then we define $\mathrm{type}(\kappa)$ to be 
the partition $\lambda\vdash d$ whose parts are the numbers
$\#\kappa^{-1}(1), \#\kappa^{-1}(2),\dots$ arranged in weakly
decreasing order.

\begin{proposition} \label{prop:type}
Let $S$ be a nonempty subset of $\pp$. Define $Y_{S,d}=\sum_\kappa
x_{\kappa(1)} x_{\kappa(2)}\cdots$, where the sum is over all maps
$\kappa\colon [d]\to \pp$ such that the parts of the partition
$\mathrm{type}(\kappa)$ all belong to $S$. Then
    $$ \left(1,\frac{Y_{S,1}}{1!}, \frac{Y_{S,2}}{2!},\dots\right) $$
is a \sfname\ sequence with seed $\sum_{j\in S} \frac{t^j}{j!}$.
\end{proposition}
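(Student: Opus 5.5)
The plan is to verify condition (c) of Theorem~\ref{thm:5cond}, i.e.\ to show that $Y_{S,d}/d!$ has the monomial expansion \eqref{eq:m} with $a_j=1/j!$ for $j\in S$, $a_j=0$ for $j\notin S$ ($j\geq 1$), and $a_0=1$. With these values $\sum_j a_jt^j=1+\sum_{j\in S}t^j/j!$. (If $0$ is regarded as excluded from $\pp$, the constant term $1$ is supplied by $a_0$; this is the reading under which the seed has constant term $1$.) Homogeneity of degree $d$ is clear, since each term $x_{\kappa(1)}\cdots x_{\kappa(d)}$ has degree $d$.

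Next I would compute the coefficient of a monomial $x_{i_1}^{\lambda_1}x_{i_2}^{\lambda_2}\cdots$ in $Y_{S,d}$, with distinct indices $i_k$ and $\lambda\vdash d$. The maps $\kappa\colon[d]\to\pp$ producing this monomial are exactly those with $\#\kappa^{-1}(i_k)=\lambda_k$ for all $k$ and no other values taken. There are $\binom{d}{\lambda_1,\lambda_2,\dots}=d!/\prod_k\lambda_k!$ of them, and for each of them $\mathrm{type}(\kappa)=\lambda$. Hence the coefficient is $d!/\prod\lambda_k!$ when all parts of $\lambda$ lie in $S$, and $0$ otherwise. Dividing by $d!$ gives
\[
\frac{Y_{S,d}}{d!}=\sum_{\lambda\vdash d}a_{\lambda_1}a_{\lambda_2}\cdots\, m_\lambda ,
\]
which is exactly \eqref{eq:m}. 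Applying (c)$\Rightarrow$(a) of Theorem~\ref{thm:5cond} then finishes the proof. Equivalently, one can expand $\prod_i\bigl(1+\sum_{j\in S}x_i^jt^j/j!\bigr)$ directly and read off the coefficient of $t^d$.

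There is no real obstacle here. The only point needing care is the multinomial count together with the observation that the condition ``parts of $\mathrm{type}(\kappa)$ lie in $S$'' depends only on the monomial. That is immediate, because $\mathrm{type}(\kappa)$ is read off from the exponent vector of the monomial.
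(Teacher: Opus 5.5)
Your proposal is correct and matches the paper's proof: both verify condition (c) of Theorem~\ref{thm:5cond} with $a_j=1/j!$ for $j\in S$ and $a_j=0$ otherwise, using the multinomial count $d!/\prod_k\lambda_k!$ of maps $\kappa$ that produce a given monomial. Your remark that the seed must be read as $1+\sum_{j\in S}t^j/j!$ is also right, and it agrees with the paper's specialization to Example~\ref{ex:easy}(e), whose seed $\sum_{j=0}^{k-1}t^j/j!$ includes the $j=0$ term.
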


\begin{proof}
If all parts of $\lambda=\mathrm{type}(\kappa)$ belong to $S$, then an 
elementary combinatorial argument shows that the coefficient of
$m_\lambda$ in the $m$-expansion of $Y_{S,d}$ is equal to the
multinomial coefficient
$\binom{d}{\lambda_1,\lambda_2,\dots}=
  \frac{d!}{\lambda_1!\,\lambda_2!\cdots}$. Hence
the condition of Theorem~\ref{thm:5cond}(c) holds with $a_i=1/i!$ if
$i\in S$ and $a_i=0$ if $i\not\in S$, and the proof follows.
\end{proof}

Example~\ref{ex:easy}(d) is the special case $S=\{1,2,\dots,k-1\}$ of
Proposition~\ref{prop:type}.

\smallskip
It is easy to determine the effect of the involution $\omega$
\cite[{\S}7.6]{ec2} on \sfname\ sequences.

\begin{theorem} \label{thm:omega}
  Let $\lit=(1,R_1(\bmx),R_2(\bmx),\dots)$ be a
  \sfname\ sequence with seed $F(t)$. Then $(1,\omega
  R_1(\bmx), \omega R_2(\bmx),\dots)$ is a \sfname\ sequence with seed
    $\frac1{F(-t)}$. 
\end{theorem}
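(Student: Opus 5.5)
The natural route is through the power-sum characterization, Theorem~\ref{thm:5cond}(b). Since $\omega$ is a ring automorphism of $\Lambda_K$ with $\omega p_n = (-1)^{n-1}p_n$, we extend it coefficientwise in $t$ to power series $\sum_n f_n t^n$ with $f_n\in\Lambda_K^n$. The extension is still a ring homomorphism and commutes with $\log$ and $\exp$, since these are given by universal formal series in $t$ with rational coefficients. Writing $\ca(t)=\sum R_nt^n$, Theorem~\ref{thm:5cond}(b) gives
$$\log \ca(t)=\sum_{n\geq 1} b_n p_n\frac{t^n}{n},\qquad \log F(t)=\sum_{n\geq1} b_n\frac{t^n}{n}.$$
Applying $\omega$ termwise yields
$$\log\Bigl(\sum_{n\geq 0}\omega R_n\, t^n\Bigr)=\sum_{n\geq 1}(-1)^{n-1}b_n p_n\frac{t^n}{n}.$$

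By Theorem~\ref{thm:5cond}(b) again, $(1,\omega R_1,\omega R_2,\dots)$ is therefore a sprout sequence. Its seed $G(t)$ is characterized by
$$\log G(t)=\sum_{n\geq1}(-1)^{n-1}b_n\frac{t^n}{n}.$$
It remains to identify $G$. We have
$$\log\frac{1}{F(-t)}=-\sum_{n\geq 1} b_n\frac{(-t)^n}{n}=\sum_{n\geq1}(-1)^{n-1}b_n\frac{t^n}{n},$$
so $G(t)=1/F(-t)$. Note that $1/F(-t)$ is a legitimate seed, since it has constant term $1$.

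The only point needing care is justifying that $\omega$ commutes with $\log$ on $\ca(t)$. This holds because $\log\ca(t)=\sum_{k\geq1}(-1)^{k-1}(\ca(t)-1)^k/k$. The coefficient of each $t^n$ in this series is a finite polynomial expression in $R_1,\dots,R_n$, and $\omega$ respects such expressions because it is a ring homomorphism. Beyond this, the proof is a direct computation.

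As a sanity check against Example~\ref{ex:easy}(a), take $F(t)=1+t$. Then $1/F(-t)=(1-t)^{-1}$, and indeed $\omega e_n=h_n$.
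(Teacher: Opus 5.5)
Your proposal is correct and follows essentially the same route as the paper. Both arguments apply $\omega p_n=(-1)^{n-1}p_n$ to the power-sum form of $\log\ca(t)$ from Theorem~\ref{thm:5cond}(b). The only difference is at the end: the paper reads off $\omega\ca(t)=1/\ca(-t)=\prod_i 1/F(-x_it)$ directly, while you compare $\log$ of the new seed with $\log\frac{1}{F(-t)}$, which amounts to the same computation.
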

  
\begin{proof}
  By Theorem~\ref{thm:5cond}(b) we can write
   $$ \log \ca(t) = \sum b_n p_n\frac{t^n}{n}. $$
  Now $\omega p_n=(-1)^{n-1}p_n$. Since $\omega$ is a homomorphism
  (even an involution) we have
   \beas \log \omega \ca(t) & = & -\sum b_np_n\frac{(-t)^n}{n}\\ & = &
       \log\left(\frac1{\ca(-t)}\right), \eeas
  and the proof follows.

\end{proof}

There are some ``specializations'' of sprout symmetric functions that
have simple generating functions or formulas. 

\begin{theorem} \label{thm:special}
  \be\item[(a)] We have $[s_n]R_n = a_n$. Equivalently,
    $$ \sum_{n\geq 0}[s_n]R_nt^n = F(t). $$
  Moreover, for any homogeneous symmetric function $f$ of degree $n$,
  $[s_n]f$ is equal to the sum of the coefficients in the
  $h$-expansion of $f$.
  \item[(b)] We have $\sum_{n\geq
    0}[s_{1^n}]R_nt^n=  \frac 1{F(-t)}$.
  \item[(c)] Fix $k\geq 1$. Then 
    $$ \sum_{n\geq 0} [h_k^n]R_{kn}t^k = \frac{1}
       {\left. F(-t)\right|_{b_i\to b_{ki},\,i\geq 1}}, $$
    where $b_i$ is defined in equation~\eqref{eq:logft}. In
    particular, $[h_n]R_n=b_n$.
  \item[(d)]  
    If $i,j\geq 1$ and $i+j=n$, then
       $$ [h_ih_j]R_n = \left\{ \begin{array}{rl} 
         b_ib_j-b_n, & i\neq j\\[.5em]
         \frac 12(b_i^2-b_n), & i=j. \end{array} \right. $$
  \item[(e)] We have $\sum_{n\geq 0}R_n(1^k)t^n=F(t)^k$, where
    $R_n(1^k)$ is short for $R_n(x_1=\cdots =x_k=1,
    x_{k+1}=x_{k+2}=\cdots=0)$.  
  \item[(f)] (generalizes (a) and (b)) Let $u$ be an
    indeterminate and write 
    $$ \frac{F(t)}{F(-ut)} = \sum_{n\geq 0} P_n(u)t^n. $$
    Then for $n\geq 1$, $P_n(u)$
    is a polynomial in $u$ of degree at
    most $n$ and divisible by $1+u$. Moreover,
     $$ \frac{P_n(u)}{1+u} = \sum_{k=0}^{n-1} \left([s_{\langle n-k,1^k
       \rangle}] R_nu^k\right) , $$
    where the notation $\lambda=\langle 1^{m_1}, 2^{m_2},\dots\rangle$ 
    indicates that $\lambda$ has $m_i$ parts equal to $i$ (so
    $|\lambda|=\sum im_i$).
   Equivalently,
   $$ \frac{F(t)}{F(-ut)} = 1 + \sum_{n\geq 1}
   \left(\sum_{k=0}^n  ([s_{\langle n-k,1^k\rangle}]+
        [s_{\langle n-k+1,1^{k-1}\rangle}])  R_nu^k\right)t^n, $$
   with the understanding that when $k=0$ we set $[s_{\langle
       n-k+1,1^{k-1}\rangle}]R_n=0$, and when $k=n$ we set $[s_{\langle
       n-k,1^k\rangle}]R_n=0$.
  \ee
\end{theorem}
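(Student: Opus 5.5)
The plan is to reduce every part to Theorem~\ref{thm:varphi} and Corollary~\ref{cor:schur}, using the homomorphism $\varphi(h_n)=a_n$. We have $\langle R_n,f\rangle=\varphi(f)$ for every $f\in\Lambda^n_K$, since the bases $\{m_\lambda\}$ and $\{h_\lambda\}$ are dual and $\varphi(h_\lambda)=a_{\lambda_1}a_{\lambda_2}\cdots$. Note also that $\varphi(e_n)$ is the coefficient of $t^n$ in $1/F(-t)$, because $\sum h_nt^n\sum e_n(-t)^n=1$. Finally, $\log F(t)=\sum b_nt^n/n$ matches $\log\sum h_nt^n=\sum p_nt^n/n$, so $\varphi(p_n)=b_n$.

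\textbf{Parts (a), (b), (e).} Part (a) is the case $\lambda=(n)$ of Corollary~\ref{cor:schur}, which gives $a_n$. For the remark, write $f=\sum c_\mu h_\mu$. Then $[s_n]f=\langle f,s_n\rangle=\langle f,h_n\rangle=\sum_\mu c_\mu\langle h_\mu,m_{(n)}\rangle$. This equals $\sum c_\mu$, since each $h_\mu$ contains $m_{(n)}$ with coefficient $1$.

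For (b), $[s_{1^n}]R_n=\varphi(e_n)$, and the generating function of the $\varphi(e_n)$ is $1/F(-t)$.

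For (e), specialize $x_1=\cdots=x_k=1$ and set the other variables to $0$ in \eqref{eq:rndef}; this gives $F(t)^k$.

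\textbf{Parts (c), (d).} The coefficient $[h_\mu]R_n$ equals $\varphi(m_\mu)$, by Theorem~\ref{thm:varphi} with $B=\{h_\lambda\}$ and dual basis $\{m_\lambda\}$.

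For (d), use $m_{(i,j)}=p_ip_j-p_n$ when $i\ne j$, and $m_{(i,i)}=\tfrac12(p_i^2-p_{2i})$. Applying $\varphi$ gives the stated formula, and $m_{(n)}=p_n$ gives $[h_n]R_n=b_n$.

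For (c), $[h_k^n]R_{kn}=\varphi(m_{(k^n)})$. Let $\psi$ be the plethysm-type substitution $p_i\mapsto p_{ki}$. Then $m_{(k^n)}=\psi(e_n)$, because $\sum_n m_{(k^n)}t^n=\prod_i(1+x_i^kt)$ and $\psi$ sends $\sum e_nt^n=\exp\sum(-1)^{i-1}p_it^i/i$ to exactly this product. Hence
\[
\sum_n \varphi(m_{(k^n)})\,t^n=\exp\Bigl(\sum_i(-1)^{i-1}b_{ki}t^i/i\Bigr),
\]
which is $1/F(-t)$ after the substitution $b_i\to b_{ki}$ in $\log F$. (The displayed exponent $t^k$ in the statement should presumably read $t^n$.)

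\textbf{Part (f).} This is the main computation. Set
\[
H(t)=\sum h_nt^n,\qquad E(t)=\sum e_nt^n .
\]
Since $\varphi(H(t))=F(t)$ and $\varphi(E(ut))=1/F(-ut)$, we get
\[
\frac{F(t)}{F(-ut)}=\varphi\bigl(H(t)E(ut)\bigr),\qquad P_n(u)=\varphi\Bigl(\sum_{k=0}^n h_{n-k}e_ku^k\Bigr).
\]
The polynomial in $u$ has degree at most $n$. Then use the Pieri rule:
\[
h_{n-k}e_k=s_{\langle n-k,1^k\rangle}+s_{\langle n-k+1,1^{k-1}\rangle}\qquad(1\le k\le n-1),
\]
with the boundary cases $h_n=s_{(n)}$ and $e_n=s_{1^n}$.

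Applying $\varphi$, which here equals $\langle R_n,\cdot\rangle$, gives the second displayed form directly. Grouping terms, the sum telescopes into $(1+u)\sum_{k=0}^{n-1}[s_{\langle n-k,1^k\rangle}]R_n\,u^k$. This proves divisibility by $1+u$ and the first formula. The only care needed is the hook indexing at the endpoints $k=0$ and $k=n$.
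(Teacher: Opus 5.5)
Your proof is correct, but it is organized differently from the paper's. You route every part through the single duality $\langle R_n,f\rangle=\varphi(f)$, which is Theorem~\ref{thm:varphi} with $B=\{m_\lambda\}$. Each coefficient then becomes $\varphi$ applied to a dual element: $h_n$ for (a), $e_n$ for (b), $m_\mu$ for (c) and (d), and $\sum_k h_{n-k}e_ku^k$, expanded by the Pieri rule, for (f).

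The paper instead uses a separate tool for each part:
\begin{enumerate}
\item[(a)] the specialization $x_1=1$, $x_2=x_3=\cdots=0$;
\item[(b)] the involution $\omega$ and Theorem~\ref{thm:omega};
\item[(c), (d)] the $h$-expansion of $p_n/n$, substituted into \eqref{eq:loggt} and exponentiated, which it leaves as ``routine'';
\item[(f)] the specialization $\psi(p_n)=1-(-u)^n$, together with the hook evaluation of $\psi(s_\lambda)$ cited from Exercise~7.43 of \cite{ec2}.
\end{enumerate}

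Your route has two advantages. For (c) and (d) it gives the closed form $[h_\lambda]R_n=\varphi(m_\lambda)$, i.e.\ $m_\lambda$ with $p_i\mapsto b_i$. That is exactly the general formula the paper's Note calls too complicated to state, and it replaces the unexecuted ``routine'' expansion with a two-line computation. For (f), your argument is the dual of the paper's: for $f$ of degree $n$ one has $\psi(f)=\langle f,\sum_k h_{n-k}e_ku^k\rangle$, so your Pieri computation is in effect a proof of the cited exercise. The paper's specializations are somewhat quicker for (a), (b) and (e).

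There is one slip in (a). You wrote $\langle f,h_n\rangle=\sum_\mu c_\mu\langle h_\mu,m_{(n)}\rangle$, but $\langle h_\mu,m_{(n)}\rangle=\delta_{\mu,(n)}$, so as written this picks out only $c_{(n)}$. The correct term is $\langle h_\mu,h_n\rangle$, which equals $[m_{(n)}]h_\mu=1$; that is what your verbal justification says.

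Your remark that $t^k$ in (c) should read $t^n$ is correct.
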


\begin{proof}
  \be\item[(a)]
It is easy to see that for any $f\in\Lambda_K^n$ we have $[s_n]f=
f(x_1=1, x_2=x_3=\cdots=0)$. Now make the substitution $x_1=1,
x_2=x_3=\cdots=0$ in equation~\eqref{eq:rndef}. The second statement
follows from the fact that for $\lambda\vdash n$ we have
$[s_n]h_\lambda=1$ (a simple consequence of Pieri's rule, for
instance). 

\smallskip
 \item[(b)] Use (a), Theorem~\ref{thm:omega} and the fact that
   $\omega s_n=s_{1^n}$.

\smallskip
 \item[(c, d)] It is easy to obtain, using the formula
   $$ \sum_{n\geq 0}\frac{p_n}{n}=\exp\left(\sum_{n\geq 0}
        h_n\right), $$
   the well-known formula
    $$ \frac{p_n}{n} = \sum_{\lambda\vdash n}
     \frac{(-1)^{\ell(\lambda)-1}}{\ell(\lambda)}
     \binom{\ell(\lambda)}{m_1,m_2,\dots}h_\lambda, $$
     where $\lambda=\langle 1^{m_1},2^{m_2},\dots\rangle$.
     Make this substitution for $\frac{p_n}{n}$ in
     equation~\eqref{eq:loggt} and expand $\exp\sum
     b_np_n\frac{t^n}{n}$ into a power series. It is routine to obtain
     the coefficients of $h_n^k$ and $h_ih_j$, completing the proof.

     \textsc{Note.} This technique can be used to obtain a formula for
     $[h_\lambda]R_n$ for any $\lambda\vdash n$, but it seems too
     complicated to be worth stating explicitly.   

\smallskip
  \item[(e)] Substitute $x_1=x_2=\cdots=x_k=1, x_{k+1}=x_{k+2}=\cdots
   =0$ in equation~\eqref{eq:rndef}.

\smallskip
 \item[(f)] Let $\psi\colon \hat{\Lambda}_{K[[t]]}\to
   K[u][[t]]$ be the continuous 
   $K[[t]]$-algebra homomorphism defined by $\psi(p_n)=1-(-u)^n,\ n>0$.
   (``Continuous'' means that $\psi$ preserves infinite linear
   combinations.)
    According to \cite[Exer.~7.43]{ec2} we have
   $$ \psi(s_\lambda)=\left\{ \begin{array}{rl} u^k(1+u), &
     \lambda =\langle n-k,1^k\rangle,\ 0\leq k\leq n-1\\
     0, & \mathrm{otherwise}. \end{array} \right. $$
   Apply $\psi$ to equation~\eqref{eq:loggt} and exponentiate to get
     \beas \psi(\ca(t)) & = & \exp \left(\sum_{n\geq
       1}(1-(-u)^n)b_n\frac{t^n}{n}\right) \\ & = &
       \frac{F(t)}{F(-ut)}, \eeas
  and the proof follows.
  \ee
\end{proof}


\smallskip
Our next result concerns $s$-positivity (or Schur positivity). For
this we need a celebrated result in the theory of total positivity,
the \emph{Edrei-Thoma theorem} \cite{edrei}\cite{thoma}. Let
$\bm{d}=(d_0=1,d_1,d_2,\dots)$ be a real sequence. Set $d_n=0$ if
$n<0$. Let $M_{\bm{d}}$ denote the infinite (Toeplitz) matrix
  $[d_{j-i}]_{i,j\geq 0}$. 

\begin{theorem}[Edrei-Thoma] \label{thm:et}
  The following two conditions are equivalent.
  \be \item Every minor of $M_{\bm{d}}$ is nonnegative, i.e.,
  $M_{\bm{d}}$ is a \emph{totally nonegative} matrix.
\item We can write
  \beq \sum_{n\geq 0}  d_nt^n = e^{\gamma t}\prod_{i\geq 1}
  \frac{1+\alpha_jt}{1-\beta_jt}, \label{eq:et} \eeq
  where $\gamma\geq 0$ and the $\alpha_j$'s and $\beta_j$'s are
  nonnegative real numbers such that the sum
  $\sum_j(\alpha_j+\beta_j)$ is convergent.
  \ee
\end{theorem}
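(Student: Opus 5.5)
The statement is the classical Edrei--Thoma theorem, so I would not reprove it from scratch. Still, here is the plan I would follow. The bridge to symmetric functions comes from Corollary~\ref{cor:schur}. The minors of the Toeplitz matrix $M_{\bm d}$ are exactly the skew Jacobi--Trudi determinants $\det[d_{\lambda_i-\mu_j-i+j}]$. These are $\varphi(s_{\lambda/\mu})$, where $\varphi$ is the homomorphism $h_n\mapsto d_n$. So total nonnegativity of $M_{\bm d}$ says that the specialization $\varphi\colon\Lambda\to\rr$ is nonnegative on every skew Schur function. Since $s_{\lambda/\mu}$ is a nonnegative combination of the $s_\nu$, this is equivalent to $\varphi(s_\lambda)\ge 0$ for all $\lambda$.

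For (2)$\Rightarrow$(1) I would work one factor at a time. The specialization is multiplicative in the generating function $\sum d_nt^n=\sum\varphi(h_n)t^n$. Each factor $1+\alpha t$, $(1-\beta t)^{-1}$, $e^{\gamma t}$ gives a Schur-nonnegative specialization:
\begin{itemize}
\item $(1-\beta t)^{-1}$ corresponds to $x_1=\beta$, and $s_\lambda(\beta)\ge 0$;
\item $1+\alpha t$ is its $\omega$-image;
\item $e^{\gamma t}$ sends $p_1\mapsto\gamma$ and $p_n\mapsto 0$ for $n\ge2$, giving $\gamma^n f^\lambda/n!\ge0$.
\end{itemize}
A product of generating functions corresponds to a coproduct. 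Since $\Delta s_\lambda=\sum c^\lambda_{\mu\nu}s_\mu\otimes s_\nu$ with nonnegative Littlewood--Richardson coefficients, nonnegativity is preserved under finite products. It passes to the infinite product by a limit, using $\sum(\alpha_j+\beta_j)<\infty$ so that the product converges coefficientwise.

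For (1)$\Rightarrow$(2), the hard direction, I would follow the standard route. First, total nonnegativity implies that all Hankel-type minors, in particular the Toeplitz minors $\det[d_{k-i+j}]_{i,j=1}^m$, are nonnegative. From this one shows that $D(t)=\sum d_nt^n$ has positive radius of convergence: the $2\times2$ minors give log-concavity $d_n^2\ge d_{n-1}d_{n+1}$, which bounds $d_n\le d_1^n$. Next, $D$ extends to a meromorphic function whose zeros lie on the negative real axis and whose poles lie on the positive real axis. This is the main obstacle. Here I would first try Aissen--Schoenberg--Whitney/Edrei's argument. Apply the Aissen--Schoenberg--Whitney analysis to $D$ and to $1/D(-t)$; the matrix of the latter is also totally nonnegative, because inverse Toeplitz minors correspond to $\omega$. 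This produces the poles $1/\beta_j$ of $D$ and the zeros $-1/\alpha_j$. Dividing them out leaves an entire function $E(t)$ without zeros whose Toeplitz matrix is still totally nonnegative. Then Nevanlinna-type growth estimates, using the bound $d_n\le d_1^n/$(something like $n!$ up to factors) that comes from nonnegative minors, show that $E$ has order at most $1$. Hence $E(t)=e^{\gamma t}$ with $\gamma\ge0$.

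Alternatively, one could appeal to Thoma's characterization of extreme characters of $\mathfrak S_\infty$ or Vershik--Kerov's asymptotic argument. In this paper I would simply cite \cite{edrei}\cite{thoma} for that direction, since only the equivalence itself is needed downstream.
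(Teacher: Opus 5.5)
Your proposal is correct. For the hard direction it ends where the paper does: the paper gives no argument for this statement, quoting it from Edrei and Thoma with the remark that it is analytic rather than combinatorial.

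The genuine difference is that you prove (2)$\Rightarrow$(1) yourself. You identify the minors of $M_{\bm{d}}$ with $\varphi(s_{\lambda/\mu})$ (or $0$) and reduce to $\varphi(s_\lambda)\ge 0$. You then check the three one-parameter specializations, handle finite products through $\Delta s_\lambda=\sum c^\lambda_{\mu\nu}s_\mu\otimes s_\nu$, and pass to the infinite product by coefficientwise convergence. All of this is sound, and it is the same Jacobi--Trudi/specialization mechanism the paper uses in the proof of Theorem~\ref{thm:schar}. What it buys is that the ``if'' half of Theorem~\ref{thm:schar} becomes self-contained and purely algebraic. The analytic theorem is then only needed for the ``only if'' half of Theorem~\ref{thm:schar} and for the converse in Theorem~\ref{thm:ehpos}, where one needs $F$ in product form before concluding $\beta_j=0$.

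Your sketch of (1)$\Rightarrow$(2) works as a pointer to the literature, but it would not stand on its own.
\begin{enumerate}
\item Log-concavity alone does not give $d_n\le d_1^n$ unless internal zeros are excluded. Use instead the minor on rows $0,1$ and columns $1,n$, which gives $d_1d_{n-1}-d_n\ge 0$ directly.
\item Multiplying by $1-\beta t$ does not preserve total nonnegativity in general; for instance, $(1-2t)/(1-t)$ has negative coefficients. That it does so when $1/\beta$ is the radius of convergence, so that poles and zeros can be stripped off one at a time, is a genuine lemma of Aissen--Schoenberg--Whitney, not a formality.
\item Most importantly, the order bound cannot ``come from nonnegative minors''. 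Total nonnegativity alone allows geometric growth, since $d_n=\beta^n$ gives a totally nonnegative matrix with $d_n=d_1^n$. Any faster decay must exploit that the reduced function $E$ is entire and zero-free. Even nonnegativity of the coefficients of both $E(t)$ and $1/E(-t)$ is not enough (take $E(t)=e^{t+\epsilon t^3}$), so the higher minors must enter essentially. That is precisely the content of Edrei's paper, and your sketch does not supply it.
\end{enumerate}
Since you cite \cite{edrei}\cite{thoma} for this direction, as the paper does, these are weaknesses of the optional sketch rather than a gap in the proposal as submitted.
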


Note that this is an analytic, not combinatorial, result. The product
in equation~\eqref{eq:et} is not defined formally; it depends on the
notion of a convergent sum. 

\begin{theorem} \label{thm:schar}
Let $\frakr=(1,R_1,R_2,\dots)$ be a \sfname\ sequence over $\rr$ with
seed $F(t)$. Then each $R_n$ is $s$-positive if and only if we can
write
  \beq F(t) = e^{\gamma t}\prod_{j\geq 1}
    \frac{1+\alpha_jt}{1-\beta_jt}, \label{eq:spos} \eeq
where $\gamma\geq 0$ and the $\alpha_j$'s and $\beta_j$'s are
nonnegative real numbers such that $\sum_j(\alpha_j+\beta_j)$ is
convergent.
\end{theorem}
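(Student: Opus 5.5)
By Corollary~\ref{cor:schur}, $[s_\lambda]R_n=\det[a_{\lambda_i-i+j}]_{i,j=1}^{\ell(\lambda)}$ for every $\lambda\vdash n$, where $F(t)=\sum a_nt^n$ with $a_0=1$. So the condition that every $R_n$ is $s$-positive says exactly that all Jacobi--Trudi type determinants $\det[a_{\lambda_i-i+j}]$ are nonnegative. The plan is to show that this family of determinants, as $\lambda$ ranges over all partitions, is essentially the same as the family of all minors of the Toeplitz matrix $M_{\bm a}=[a_{j-i}]_{i,j\ge0}$. Theorem~\ref{thm:et}, applied with $d_n=a_n$, then gives the equivalence with \eqref{eq:spos}.

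For the implication from \eqref{eq:spos} to $s$-positivity, I apply Theorem~\ref{thm:et} to $\bm d=\bm a$. This shows that $M_{\bm a}$ is totally nonnegative. The matrix $[a_{\lambda_i-i+j}]_{i,j=1}^{\ell}$ is, up to transposition, the submatrix of $M_{\bm a}$ with rows $0,\dots,\ell-1$ and columns $\lambda_{\ell}+1-\ell+\ell-1,\dots$. More precisely, its entries are $a_{c_j-r_i}$ with row indices $r_i=\ell-i$ and column indices $c_i=\lambda_i+\ell-i$, reindexed so that both index sequences increase. Reversing both the row order and the column order does not change the determinant, so each $[s_\lambda]R_n$ is a minor of $M_{\bm a}$ and hence is $\ge0$.

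For the converse, I must show that if all $\det[a_{\lambda_i-i+j}]\ge0$, then every minor of $M_{\bm a}$ is $\ge0$. A general minor uses rows $r_1<\dots<r_k$ and columns $c_1<\dots<c_k$, with entries $a_{c_j-r_i}$ and $a_m=0$ for $m<0$. These are the skew Jacobi--Trudi determinants $\det[h_{\lambda_i-\mu_j-i+j}]$ under the specialization $\varphi$, which sends $h_n\mapsto a_n$. So the minor equals $\varphi(s_{\lambda/\mu})$ for suitable $\lambda\supseteq\mu$. If the indices do not give $\lambda\supseteq\mu$, the minor has a zero pattern forcing it to be $0$; this is the standard fact that non-containment gives a vanishing skew Schur function. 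Now $s_{\lambda/\mu}=\sum_\nu c^\lambda_{\mu\nu}s_\nu$ with nonnegative Littlewood--Richardson coefficients. Hence $\varphi(s_{\lambda/\mu})=\sum_\nu c^\lambda_{\mu\nu}[s_\nu]R_{|\nu|}\ge0$, where $\varphi(s_\nu)=[s_\nu]R_{|\nu|}$ by Theorem~\ref{thm:varphi}. So $M_{\bm a}$ is totally nonnegative, and Theorem~\ref{thm:et} yields \eqref{eq:spos}.

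The main obstacle is the bookkeeping that converts an arbitrary minor $\det[a_{c_j-r_i}]$ into skew Jacobi--Trudi form. Concretely, I set $\lambda_i+k-i=c_{k+1-i}$ and $\mu_i+k-i=r_{k+1-i}$, shifting all indices so that the smallest row index is $0$; this is allowed by translation invariance of the Toeplitz matrix. I then check carefully that the degenerate cases, where some $c_j<r_j$, give a zero determinant, which is consistent with $s_{\lambda/\mu}=0$. A small remaining point is that Theorem~\ref{thm:et} requires $a_0=1$, which holds for a seed. Everything else is direct from Corollary~\ref{cor:schur}.
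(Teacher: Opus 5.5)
Your proposal is correct and follows essentially the same route as the paper: identify $[s_\lambda]R_n$ with the image of $s_\lambda$ under the specialization $h_n\mapsto a_n$, observe that every minor of the Toeplitz matrix $M_{\bm a}$ is either $0$ or the specialization of a skew Jacobi--Trudi determinant (and that every ordinary Jacobi--Trudi matrix occurs as a minor), use Littlewood--Richardson positivity of $s_{\lambda/\mu}$ to show that $s$-positivity of all $R_n$ is equivalent to total nonnegativity of $M_{\bm a}$, and finish with Edrei--Thoma. The differences are minor: you obtain $\varphi(s_\lambda)=[s_\lambda]R_n$ from Corollary~\ref{cor:schur}/Theorem~\ref{thm:varphi} rather than by specializing the Cauchy identity, and you write out the index bookkeeping that the paper leaves implicit, including the vanishing of the minor when $\mu\not\subseteq\lambda$.
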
  

\begin{proof}
Let $\Gamma$ be the algebra of symmetric functions in the
variables $\bmy=(y_1,y_2,\dots)$ whose coefficients lie in the
$\rr$-algebra $\Lambda_\rr(\bmx)$ of symmetric
functions with real coefficients in the variables $\bmx$. Define a
ring homomorphism $\psi\colon \Gamma\to \Lambda_\rr(\bmx)$ by
$\psi(h_n(\bmy))=a_n$ and $\psi(f)=f$ for
$f\in\Lambda_\rr(\bmx)$.

\smallskip
It is a basic fact \cite[Prop.~7.5.3 and Thm.~7.12.1]{ec2} from the
theory of symmetric functions that 
  \beq \sum_\lambda m_\lambda(\bmx)h_\lambda(\bmy) t^{|\lambda|} =
    \sum_\lambda s_\lambda(\bmx)s_\lambda(\bmy)t^{|\lambda|},
    \label{eq:cauchy2}
  \eeq
where $\lambda$ ranges over all partitions of all $n\geq 0$.   
Write $a_\lambda=a_{\lambda_1}a_{\lambda_2}\cdots$. Apply $\psi$ to
equation~\eqref{eq:cauchy2}. The left-hand side becomes
  $$ \sum_\lambda a_\lambda m_\lambda(\bmx)t^{|\lambda|}
   = \prod_i \left(\sum_{n\geq 0} a_n x_i^nt^n\right) =
  \ca(t). $$
Thus for $\lambda\vdash n$, $\psi(s_\lambda(\bmy))=\langle R_n,
s_\lambda\rangle$.

\smallskip
Write $\bm{a}=(a_0,a_1,\dots)$, so $M_{\bm{a}}=[a_{j-i}]_{i,j\geq 0}$
(with $a_n=0$ for $n<0$). If we replace each $a_n$ by $h_n$ in
$M_{\bm{a}}$, obtaining the matrix $M_{\bm{h}}$, then every minor of
$M_{\bm{h}}$ is either 0 or the Jacobi-Trudi matrix of a skew Schur
function $s_{\lambda/\mu}$, and conversely the Jacobi-Trudi matrix of
every skew Schur function occurs as a minor. Thus if $N$ is a minor of
$M_{\bm{h}}$ equal to $s_\lambda$, where $\lambda\vdash n$, then the
corresponding minor of $M_{\bm{a}}$ is $\langle
R_n,s_\lambda\rangle$. Moreover, every skew Schur function is a
nonnegative linear combination of ordinary Schur functions
\cite[eqn.~(7.64) and Cor.~7.18.6]{ec2}. It follows that $R_n$ is
  $s$-positive for all $n$ if and only if every minor of $M_{\bm{a}}$
  is nonnegative. The proof now follows from Theorem~\ref{thm:et}.
\end{proof}

We turn to the question of the $e$-positivity and $h$-positivity of
sprout symmetric functions. The proof of the ``if'' part of
Theorem~\ref{thm:ehpos} below was provided by Vince Vatter, private
communication, March 2026. First we need a simple lemma.

\begin{lemma} \label{lemma:em}
  Let $\lambda\vdash n$. Then
   $$ [e_1^n]m_\lambda = \left\{ \begin{array}{rl}
    1, & \lambda=(n)\\
    0, & \mathrm{otherwise}, \end{array} \right. $$
and
  $$ [e_2e_1^{n-2}]m_\lambda = \left\{ \begin{array}{rl}
    -n, & \lambda=(n)\\
    1, & \lambda=(n-1,1)\rangle\\
    0. & \mathrm{otherwise}. \end{array} \right. $$
\end{lemma}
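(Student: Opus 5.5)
The plan is to pass to a quotient of $\Lambda_K$ in which only the monomials $e_1^n$ and $e_2e_1^{n-2}$ survive. Since $\Lambda_K=K[e_1,e_2,\dots]$ is a polynomial ring, the coefficients $[e_1^n]f$ and $[e_2e_1^{n-2}]f$ of any $f\in\Lambda^n_K$ are unchanged if we reduce modulo the ideal $I=(e_3,e_4,\dots,e_2^2)$. Only the images of $1,e_1,e_1^2,\dots$ and $e_2,e_2e_1,e_2e_1^2,\dots$ are nonzero in $\Lambda_K/I$, and they form a basis there. So it suffices to compute $m_\lambda \bmod I$.

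First, reduction modulo $(e_3,e_4,\dots)$ is the same as specializing to two variables $x_1=x,\ x_2=y$. Indeed, $\Lambda_K/(e_3,e_4,\dots)\cong K[x,y]^{\fs_2}=K[e_1,e_2]$, and under this map $m_\lambda\mapsto m_\lambda(x,y)$. In particular $m_\lambda\equiv 0$ whenever $\ell(\lambda)\ge 3$.

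Next, I treat the two remaining shapes. If $\lambda=(a,b)$ with $a\ge b\ge 1$, then $m_\lambda(x,y)$ is divisible by $(xy)^b=e_2^b$, so it vanishes modulo $e_2^2$ unless $b=1$. When $b=1$, so $\lambda=(n-1,1)$, I compute directly:
\begin{itemize}
\item if $n\ge 3$, then $m_{(n-1,1)}(x,y)=xy(x^{n-2}+y^{n-2})=e_2\,p_{n-2}(x,y)\equiv e_2e_1^{n-2}$, because $p_{n-2}\equiv e_1^{n-2}$ modulo $e_2$;
\item if $n=2$, then $m_{11}(x,y)=e_2$ directly.
\end{itemize}
In both cases the coefficient of $e_2e_1^{n-2}$ is $1$ and that of $e_1^n$ is $0$.

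Finally, for $\lambda=(n)$ we have $m_{(n)}=p_n(x,y)$. By Newton's identities in two variables, $p_k=e_1p_{k-1}-e_2p_{k-2}$. An easy induction on $k$ then gives
$$p_k\equiv e_1^k-k\,e_1^{k-2}e_2 \pmod{e_2^2}\qquad (k\ge 2).$$
The inductive step is $e_1(e_1^{k-1}-(k-1)e_1^{k-3}e_2)-e_2e_1^{k-2}$, and the base cases are $p_1=e_1$ and $p_2=e_1^2-2e_2$. This yields $[e_1^n]m_{(n)}=1$ and $[e_2e_1^{n-2}]m_{(n)}=-n$. The trivial case $n=1$ is immediate.

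Putting these together proves both displayed formulas. The only delicate points are justifying that coefficient extraction commutes with passing to $\Lambda_K/I$, which holds because $I$ is spanned by $e$-monomials other than those we extract, and handling the small case $n=2$ separately.
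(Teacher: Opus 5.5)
Your proof is correct, but it takes a different route from the paper's.

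The paper works with the transition matrix in the opposite direction. It notes by inspection that $m_{(n)}$ occurs only in $e_1^n$, with coefficient $1$. It also notes that $m_{(n-1,1)}$ occurs only in $e_1^n$ and $e_2e_1^{n-2}$, with coefficients $n$ and $1$. The relation $\sum_\mu [e_\mu]m_\lambda\cdot[m_\nu]e_\mu=\delta_{\lambda\nu}$, taken at $\nu=(n)$ and $\nu=(n-1,1)$, then gives the two needed columns of the inverse matrix at once. In that argument the $-n$ is simply $-[m_{(n-1,1)}]e_1^n$.

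You avoid any inversion by computing in $\Lambda_K/(e_3,e_4,\dots,e_2^2)\cong K[e_1,e_2]/(e_2^2)$. Your monomial-ideal remark correctly justifies reading off the two coefficients there. The two-variable specialization disposes of $\ell(\lambda)\ge 3$, and divisibility by $e_2^b$ disposes of $(a,b)$ with $b\ge 2$. The $-n$ then comes out of the Newton recurrence for $p_n(x,y)$.

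The paper's argument is shorter and needs nothing beyond knowing which $e_\mu$ contain $m_{(n)}$ and $m_{(n-1,1)}$. Yours is more computational but entirely explicit, including the small cases $n=1,2$. The same device (specialize to finitely many variables, then quotient by a monomial ideal in the $e_i$) would also extract $[e_\mu]m_\lambda$ for any divisibility-closed set of monomials $e_\mu$, without inverting a transition matrix.
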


\begin{proof}
There are many proofs. One straightforward one is the following. It
can be seen by inspection that
  $$ [m_n]e_\mu = \left\{ \begin{array}{rl}
      1, & \mu=\langle 1^n\rangle\\
      0, & \mathrm{otherwise}, \end{array} \right. $$
and
   $$ [m_{n-1,1}]e_\mu = \left\{ \begin{array}{rl}
     n, & \mu=\langle 1^n\rangle\\
     1, & \mu=\langle 2,1^{n-2}\rangle\\
     0, & \mathrm{otherwise}, \end{array} \right. $$
from which the proof follows easily.
\end{proof}

\begin{theorem} \label{thm:ehpos}
  \be
 \item[(a)] Every $\beta_j=0$ in equation~\eqref{eq:spos} if and only
   if each $R_n$ is $e$-positive.
\item[(b)] Every $\alpha_j=0$ in equation~\eqref{eq:spos} if and
only if each $R_n$ is $h$-positive.
 \ee
\end{theorem}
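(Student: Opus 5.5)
The plan is to use Theorem~\ref{thm:schar} to reduce to seeds of the form \eqref{eq:spos}, handle the ``if'' directions by an explicit factorization of $\ca(t)$, and handle the ``only if'' directions by extracting two specific $e$-coefficients with Lemma~\ref{lemma:em}. Part (b) is then deduced from part (a) using $\omega$.

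\emph{``If'' in (a).} Suppose $F(t)=e^{\gamma t}\prod_j(1+\alpha_jt)$ with $\gamma,\alpha_j\geq 0$ and $\sum\alpha_j<\infty$. Then
$$\ca(t)=\prod_iF(x_it)=e^{\gamma p_1t}\prod_{i,j}(1+\alpha_jx_it).$$
The dual Cauchy identity gives
$$\prod_{i,j}(1+\alpha_jx_it)=\sum_\lambda m_\lambda(\bm\alpha)\,e_\lambda(\bmx)\,t^{|\lambda|},$$
and each $m_\lambda(\bm\alpha)$ is a convergent sum of nonnegative reals. The exponential factor is $\sum_n\gamma^np_1^nt^n/n!$ with $p_1^n=e_1^n$. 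A product of $e$-positive series is $e$-positive, so each $R_n$ is $e$-positive. With infinitely many $\alpha_j$ one should say a word about convergence, but coefficientwise everything is an absolutely convergent sum of nonnegative terms, so this is harmless.

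\emph{``Only if'' in (a).} Suppose every $R_n$ is $e$-positive. Since $e_\lambda$ is $s$-positive, every $R_n$ is $s$-positive, so Theorem~\ref{thm:schar} gives the representation \eqref{eq:spos}; it remains to show all $\beta_j=0$. By Theorem~\ref{thm:5cond}(c), $R_n=\sum_\lambda a_\lambda m_\lambda$, and Lemma~\ref{lemma:em} then gives
$$[e_1^n]R_n=a_n,\qquad [e_2e_1^{n-2}]R_n=a_1a_{n-1}-na_n.$$
Nonnegativity of the second coefficient gives $a_n\leq a_1a_{n-1}/n$, and induction yields $0\leq a_n\leq a_1^n/n!$. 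Hence $F$ is an entire function.

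Now suppose some $\beta_j>0$. Since $\beta_j\to 0$, there is a largest value $\beta=\max_j\beta_j>0$. For real $0<t<1/\beta$, every factor in \eqref{eq:spos} is finite and positive, and the factor(s) $1/(1-\beta t)$ tend to $\infty$ as $t\to 1/\beta^-$. The remaining factors stay bounded below by a positive constant near $t=1/\beta$: the $e^{\gamma t}$ and $(1+\alpha_jt)$ factors are at least $1$, and the other $\beta$-factors have $\beta_k<\beta$ or are at least $1$. So $F(t)\to\infty$ as $t\to1/\beta^-$, which contradicts $F$ being entire. This contradiction is the main step; the only care needed is to make sure the analytic function given by the product \eqref{eq:spos} agrees with the formal series $\sum a_nt^n$ on $[0,1/\beta)$, which holds because both are determined by the same convergent expansion there.

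\emph{Part (b).} Apply Theorem~\ref{thm:omega}. Since $\omega h_\lambda=e_\lambda$, the $R_n$ are all $h$-positive if and only if the $\omega R_n$ are all $e$-positive. The $\omega R_n$ form the sprout sequence with seed
$$\frac1{F(-t)}=e^{\gamma t}\prod_j\frac{1+\beta_jt}{1-\alpha_jt},$$
in which the roles of the $\alpha_j$ and $\beta_j$ are exchanged. Part (a), applied to this seed, says the $\omega R_n$ are all $e$-positive if and only if every $\alpha_j=0$. This gives (b).
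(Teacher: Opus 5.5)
Your proof is correct and follows the paper's route: the ``if'' direction by expanding $\ca(t)$ in the $e$-basis, the ``only if'' direction via Lemma~\ref{lemma:em} forcing $0\le a_n\le a_1^n/n!$, so that $F$ is entire and cannot have poles at $1/\beta_j$, and part (b) via $\omega$ and Theorem~\ref{thm:omega}. Your coefficient $[e_2e_1^{n-2}]R_n=a_1a_{n-1}-na_n$ and your bound $a_1^n/n!$ are the accurate versions of the paper's $a_{n-1}-na_n$ and $a_1/n!$. Your blow-up argument as $t\to 1/\max_j\beta_j$ supplies the detail the paper leaves implicit when it says the product must be entire.
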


\begin{proof}
  \be\item[(a)]
  Suppose that each $\beta_j=0$. Thus
 \beas \ca(t) & = & \prod_i e^{\gamma x_it}
  \prod_j(1+\alpha_jx_it)\\ & = &
  e^{\gamma e_1t}\prod_j\prod_i(1+\alpha_jx_it)\\ & = &
  e^{\gamma e_1t}\prod_j\sum_{n\geq 0} \alpha_j^n e_nt^n. \eeas
 When this is expanded as a power series in $t$, it is clear (since
 $\gamma\geq 0$ and each $\alpha_j\geq 0$) that for $\lambda\vdash n$
 the coefficient of $e_\lambda t^n$ is nonnegative.
 
   Conversely, by \eqref{eq:m} and the previous lemma we get
   \beas R_n & = & a_nm_n +a_1a_{n-1}m_{n-1,1}+\cdots\\ & = &
     a_n(e_1^n-ne_2e_1^{n-2}+\cdots) + a_1a_{n-1}(e_2e_1^{n-2}
       +\cdots)+\cdots\\ & = &
   a_n e_1^n + (a_{n-1}-na_n)e_2e_1^{n-2}+\cdots. \eeas
  Hence if $R_n$ is $e$-positive then $a_n\leq \frac{a_{n-1}}{n}$.
  Therefore
   $$ a_n\leq \frac{a_{n-1}}{n}\leq \frac{a_{n-2}}{n(n-1)}\leq \cdots
       \leq \frac{a_1}{n!}. $$
    It follows that $F(t)=\sum a_n t^n$ is an entire function.
   In order for the product~\eqref{eq:spos} to be entire we must have
   $\beta_j=0$ for all $j$. This completes the proof of (a). 
  
  \item[(b)] Apply (a) to the seed $1/F(-t)$ and use
    Theorem~\ref{thm:omega}. 
 \ee 
\end{proof}

We say that a \sfname\ sequence $\frakr$ is \emph{Schur positive} if
every $R_n$ is Schur positive.  There are many known results (stated
in the language of P\'olya frequency sequences) about operations
preserving the Schur positivity of \sfname\ sequences. One that will
be useful to us in Part 2 (under preparation)
is the following. 

\begin{corollary}
Let $d$ be a positive integer. If the \sfname\ sequence $\frakr$ with
seed $F(t)=\sum a_nt^n$ is Schur positive, then the \sfname\ sequence
$\frakr_d$ with seed $\sum a_{dn}t^n$ is Schur positive.
\end{corollary}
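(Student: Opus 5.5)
By Theorem~\ref{thm:schar} and its proof, Schur positivity of $\frakr$ is equivalent to total nonnegativity of the Toeplitz matrix $M_{\bm{a}}=[a_{j-i}]_{i,j\geq 0}$. So the plan is to show that total nonnegativity of $M_{\bm{a}}$ implies total nonnegativity of $M_{\bm{a}^{(d)}}=[a_{d(j-i)}]_{i,j\geq 0}$, where $\bm{a}^{(d)}=(a_0,a_d,a_{2d},\dots)$. Once this is done, Theorem~\ref{thm:schar} applied in the reverse direction (equivalently, Theorem~\ref{thm:et}) gives that $\frakr_d$ is Schur positive. The point of this route is that it avoids the analytic product representation of $\sum a_{dn}t^n$ altogether. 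That matters, because multisecting a product $e^{\gamma t}\prod(1+\alpha_jt)/(1-\beta_jt)$ does not visibly keep the product form.

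The key observation is that $M_{\bm{a}^{(d)}}$ is a submatrix of $M_{\bm{a}}$. Take the rows indexed by $0,d,2d,\dots$ and the columns indexed by $0,d,2d,\dots$. The $(di,dj)$ entry of $M_{\bm{a}}$ is $a_{dj-di}=a_{d(j-i)}$, which is exactly the $(i,j)$ entry of $M_{\bm{a}^{(d)}}$. The convention $a_n=0$ for $n<0$ matches on both sides. Any minor of a submatrix is a minor of the original matrix, so every minor of $M_{\bm{a}^{(d)}}$ is nonnegative.

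The remaining step is to check that the hypotheses of Theorem~\ref{thm:schar} are met for the seed $\sum_n a_{dn}t^n$. Its constant term is $a_0=1$ and its coefficients are real, so it is a legitimate seed over $\rr$. The equivalence in the proof of Theorem~\ref{thm:schar}, namely that $R_n$ is $s$-positive for all $n$ if and only if every minor of $M_{\bm{a}}$ is nonnegative, used only Jacobi--Trudi and the Schur positivity of skew Schur functions. It therefore applies verbatim to the seed $\sum_n a_{dn}t^n$. There is no real obstacle here. The only point needing care is to use the minor characterization rather than the product form (\ref{eq:spos}), since the latter would require showing that a multisection of an Edrei--Thoma function is again of Edrei--Thoma form, which is true but not obvious directly.
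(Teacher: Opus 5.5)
Your proposal is correct and takes the same approach as the paper. Both arguments note that $[a_{d(j-i)}]$ is the submatrix of $M_{\bm{a}}=[a_{j-i}]$ on the rows and columns indexed by multiples of $d$, so every one of its minors is a minor of $M_{\bm{a}}$, and then conclude via the minor characterization of Schur positivity established in the proof of Theorem~\ref{thm:schar} (with Theorem~\ref{thm:et}).
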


\begin{proof}
The matrix $[a_{d(j-i)}]$ is a submatrix of $[a_{j-i}]$. Hence every
minor of $[a_{d(j-i)}]$ is also a minor of $[a_{j-i}]$. The
proof follows from Theorems~\ref{thm:et} and \ref{thm:schar}. 
\end{proof}

\section{A symmetric function arising from the work of Amdeberhan-Ono-Singh}
In 2024 Amdeberhan, Ono, and Singh \cite{a-o-s} considered the
function $\phi$ defined on partitions $\lambda$ of $n$ by
   $$ \phi(\lambda)=(2n)!\cdot
     \prod_{k=1}^n\frac1{m_k!}
     \left(\frac{4^k(4^k-1)B_{2k}}{(2k)(2k)!}\right)^{m_k}, $$
where $\lambda=\langle 1^{m_1},\dots, n^{m_n}\rangle$
and $B_{2k}$ is a Bernoulli number. The original motivation was
to express a certain theta function of Ramanujan in terms of
Eisenstein series.

\smallskip
It is not hard to see that $\phi(\lambda)\in\zz$ and
   \beq \sum_{\lambda\vdash n} |\phi(\lambda)|= E_{2n},
     \label{eq:en} \eeq
an Euler number. The Euler numbers $E_k$ are defined by
  \beq \sec t + \tan t = \sum_{k\geq
      0}E_k\frac{t^k}{k!}. \label{eq:euldef} \eeq
Moreover, $E_k$ is the number of \emph{alternating
  permutations}{\footnote{Do not confuse alternating  permutations with
    even permutations, which are permutations in the alternating
    group.}  $w\in \fs_k$, i.e., $w=a_1a_2\cdots a_k$ where
$a_1>a_2<a_3>a_4<\cdots$. Equation~\eqref{eq:en} suggests the
question: what ``nice'' class of alternating permutations in
$\fs_{2n}$ does $|\phi(\lambda)|$ count? This question was the
original impetus for the present paper.

\smallskip
We can find a combinatorial interpretation of $|\phi(\lambda)|$
without any appeal to the theory of symmetric functions. After we do
this, we will define a symmetric function $A_n$
associated with the
$\phi(\lambda)$'s for $\lambda\vdash n$ and show that
$(A_0,A_1,\dots)$
is a sprout sequence with seed $\sec(\sqrt{t})$. We
will then investigate further properties of $A_n$. 

\smallskip

Define $\fraka_n$ to be the set of alternating
permutations in the group $\sn$, so $\#\fraka_n=E_n$. If $w=a_1
  a_2\cdots a_{2n}\in \frakan$, then define $\hat{w}$ to be the
  sequence $a_1 a_3 a_5\cdots a_{2n-1}$. For notational convenience
  set $b_i=a_{2i-1}$, so $\hat{w}=b_1 b_2\cdots b_n$. The \emph{record
    set} $\rec(\hat{w})$ is the set of indices $1\leq i\leq n$ for
  which $b_i$ is a left-to-right maximum (or \emph{record}) in
  $\hatw$. In other words, $b_i>b_j$ for all $1\leq j<i$. Thus always
  $1\in\rec(\hatw)$. Suppose that the elements of $\rec(\hatw)$ are
  $r_1<r_2<\cdots <r_j$. Define the \emph{record partition}
  $\rp(\hatw)$ of $\hatw$ to be the partition of $n$ with parts
  $r_2-r_1, r_3-r_2,r_4-r_3,\dots, n+1-r_j$ arranged in weakly
  decreasing order.  Note that $\rp(\hatw)\vdash n$.

\begin{example}
  Let $w=7,2,5,4,8,3,10,6,9,5\in\fraka_{10}$. Then $\hatw=7,5,8,10,9$,
$\rec(\hatw)=\{1,3,4\}$, and $\rp(\hatw)=(2,2,1)$. 
\end{example}

We can now give a combinatorial interpretation of $|\phi(\lambda)|$.

\begin{theorem} \label{thm:rp}
We have $|\phi(\lambda)|=\#\{w\in\frakan\st
\rp(\hat{w})=\lambda\}$.
\end{theorem}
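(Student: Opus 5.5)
The plan is to rewrite $|\phi(\lambda)|$ in terms of odd-indexed Euler numbers, and then match that formula with a bijective decomposition of $w\in\frakan$ into blocks cut at the records of $\hatw$.

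\textbf{Step 1 (rewriting $|\phi(\lambda)|$).} Recall the classical expansion
$$\tan t=\sum_{k\geq1}(-1)^{k-1}\frac{2^{2k}(2^{2k}-1)B_{2k}}{(2k)!}t^{2k-1}.$$
Comparing it with \eqref{eq:euldef} gives
$$\frac{E_{2k-1}}{(2k-1)!}=\frac{4^k(4^k-1)|B_{2k}|}{(2k)!}.$$
Hence
$$\left|\frac{4^k(4^k-1)B_{2k}}{(2k)(2k)!}\right|=\frac{E_{2k-1}}{(2k)!},$$
and therefore
$$|\phi(\lambda)|=\frac{(2n)!}{\prod_k m_k!\,\prod_i (2\lambda_i)!}\prod_i E_{2\lambda_i-1}.$$
This is exactly the number of ways to do the following: split $[2n]$ into unordered blocks of sizes $2\lambda_1,2\lambda_2,\dots$, then place on each block of size $2k$ one of $E_{2k-1}$ structures. (As a check, this is consistent with \eqref{eq:en}, since $\log\sec t=\sum_k E_{2k-1}t^{2k}/(2k)!$.) So it suffices to build a bijection between the $w\in\frakan$ with $\rp(\hatw)=\lambda$ and such structured set partitions.

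\textbf{Step 2 (block decomposition).} Let $r_1<\cdots<r_j$ be the elements of $\rec(\hatw)$, and set $r_{j+1}=n+1$. Cut $w$ into consecutive segments
$$S_i=a_{2r_i-1}a_{2r_i}\cdots a_{2r_{i+1}-2},$$
so $S_i$ has length $2(r_{i+1}-r_i)$. I will check three facts.
\begin{itemize}
\item[(i)] The first entry $a_{2r_i-1}$ of $S_i$ is its maximum. Every later odd-position entry of $S_i$ is not a record, so it is smaller than some earlier odd entry, hence smaller than $a_{2r_i-1}$. Every even-position entry is smaller than its odd neighbours.
\item[(ii)] The maxima of $S_1,S_2,\dots$ increase, because they are successive records.
\item[(iii)] Deleting the maximum from $S_i$ leaves a word $c_1<c_2>c_3<\cdots>c_{2k-1}$ of odd length $2k-1$. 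Its pattern is an alternating permutation of $[2k-1]$ read in the up-down convention, and those are counted by $E_{2k-1}$ (by the complementation $x\mapsto 2k-x$).
\end{itemize}

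\textbf{Step 3 (inverse map).} Conversely, start with a set partition of $[2n]$ into even blocks, and on each block of size $2k$ an up-down arrangement of all elements except the block maximum. Put the maximum first in each block, and order the blocks by increasing maxima. The concatenation must then be checked to lie in $\frakan$. The only nontrivial inequality is at the junctions. There the last entry of a block is in an even position and is smaller than the block's maximum, which in turn is smaller than the next block's first entry. The records of $\hatw$ are then exactly the block starts: a block maximum exceeds everything earlier, while the other odd entries of a block are less than its first entry. So $\rp(\hatw)$ is the multiset of half-block-sizes, and the two maps are mutually inverse.

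The main obstacle is Step 3, namely verifying that the records of $\hatw$ recover exactly the block boundaries, so that the two constructions really are inverse to each other. That check, together with the bookkeeping of the up-down versus down-up convention for $E_{2k-1}$, is where I would be most careful. Step 1 is a routine Bernoulli–tangent identity.
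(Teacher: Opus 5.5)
Your proposal is correct and follows the paper's proof essentially step for step: the paper likewise rewrites $|\phi(\lambda)|$ as $(2n)!\prod_k \frac{1}{m_k!}\bigl(E_{2k-1}/(2k)!\bigr)^{m_k}$. It then builds each $w$ by choosing a set partition into blocks of sizes $2\lambda_j$, ordering the blocks by increasing maxima, placing each maximum first followed by an up-down arrangement of the rest, and concatenating. Your Steps 2 and 3 make explicit the ``each such $w$ is obtained exactly once'' claim that the paper leaves to the reader, in particular the check that the records of $\hatw$ are exactly the block starts (a non-record odd entry lies below the most recent record, which is the block's first entry).
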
  

\begin{proof}
It is known (see for example \cite[(17), p. 259]{jordan}) that
$$
E_{2k-1}=4^k(4^k-1)\frac{|B_{2k}|}{2k}.
$$
Therefore, if $\lambda=\langle 1^{m_1},2^{m_2},\dots\rangle$ then
 \beq |\phi(\lambda)|=(2n)!\cdot
     \prod_{k=1}^n\frac1{m_k!}
     \left(\frac{E_{2k-1}}{(2k)!}\right)^{m_k}.
     \label{eq:philambda} \eeq
The theorem will follow once we show that the right side of equation
(\ref{eq:philambda}) counts the number of alternating permutations $w$
of length $2n$ with record partition $\lambda$.  So, we complete the
proof by observing that each such $w$ is obtained exactly once through
the following process. 

\smallskip
First, choose a set partition 
$$
\pi=\{P_1,\ldots,P_\ell\}
$$
of $[2n]$ such that if $\lambda=(\lambda_1,\ldots,\lambda_\ell)$ then
$|P_j|=2\lambda_j$.  There are  
$$
\frac{{{2n} \choose {2\lambda_1,\ldots,2\lambda_\ell}}} {\prod_{ k=1
  }^nm_k! }  =\frac{(2n)!}{\prod_{k=1}^nm_k!(2k)!^{m_k}} 
$$
ways to choose $\pi$.

\smallskip
Second, for each $j \in [\ell]$ let $p_j$ be the largest element of
$P_j$ and reorder the $P_j$ if necessary so that
$p_1<p_2< \cdots <p_\ell$. 

\smallskip
Third, for each $j \in [\ell]$ choose an alternating permutation $w^j$
of $P_j$ in which $p_j$ appears first.  Since one obtains such a
permutation by choosing a permutation $q_1\ldots
q_{\#P_j -1}$ of $P_j
\setminus \{p_j\}$ such that $q_1<q_2>q_3< \cdots$, the number of ways
to choose all of the $w^j$ is $\prod_{j=1}^\ell E_{2\lambda_j-1}$. 

\smallskip
Finally, we obtain $w$ by concatenating the $w^j$ so that $w^j$
appears before $w^{j+1}$ for each $j \in [\ell-1]$. 
\end{proof} 

\textsc{Open problem.} We defined the record partition $\rp(\hatw)$ to
be certain numbers $r_2-r_1,r_3-r_2,\dots,n+1-r_j$ arranged in
decreasing order. If we keep them in their given order, then we obtain
the \emph{record composition} $(r_2-r_1,r_3-r_2,\dots,n+1-r_j)$. Can
we refine Theorem~\ref{thm:rp} by replacing record partitions with
record compositions? In the theory of noncommutative symmetric
functions \cite{noncomm} the role of partitions is replaced by
compositions, so perhaps noncommutative symmetric functions arise in
the putative refinement of Theorem~\ref{thm:rp}.

\section{The seed $\sec(\sqrt{t})$} \label{sec:secsqrtt}
The form of equation~\eqref{eq:philambda} suggests to someone
sufficiently versed in the theory of symmetric functions that it might
be worthwhile to define the symmetric function
  $$ A_n = A_n(\bmx) = \frac{1}{(2n)!}\sum_{\lambda\vdash n}
     |\phi(\lambda)| \cdot p_\lambda, $$
     where $p_\lambda$ is a power sum symmetric function.

\begin{theorem} \label{thm:andef}
  The sequence $\fraka=(A_0,A_1,\dots)$ is a sprout sequence with seed 
  $F(t)=\sec(\sqrt{t})$.
\end{theorem}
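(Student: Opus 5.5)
Our plan is to use the characterization in Theorem~\ref{thm:5cond}(d). By equation~\eqref{eq:philambda}, for $\lambda=\langle 1^{m_1},2^{m_2},\dots\rangle\vdash n$ we have
 $$ \frac{|\phi(\lambda)|}{(2n)!} = \prod_{k\geq 1}\frac{1}{m_k!}\left(\frac{E_{2k-1}}{(2k)!}\right)^{m_k}. $$
Since $z_\lambda=\prod_k k^{m_k}m_k!$, we can rewrite this as $z_\lambda^{-1}\prod_k b_k^{m_k}$ with
 $$ b_k = k\,\frac{E_{2k-1}}{(2k)!} = \frac{E_{2k-1}}{2(2k-1)!}. $$
Hence $A_n=\sum_{\lambda\vdash n}z_\lambda^{-1}b_{\lambda_1}b_{\lambda_2}\cdots p_\lambda$, which is exactly the form~\eqref{eq:p}. (For $n=0$ the empty product gives $A_0=1$.) By Theorem~\ref{thm:5cond}(d), $\fraka$ is then a sprout sequence whose seed $F$ satisfies $\log F(t)=\sum_{k\geq1} b_k t^k/k$.

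It remains to identify this seed, i.e.\ to show that
 $$ \sum_{k\geq 1}\frac{E_{2k-1}}{(2k)!}\,t^k = \log\sec(\sqrt t). $$
We substitute $t=u^2$; the claim becomes $\sum_{k\geq1}E_{2k-1}u^{2k}/(2k)! = \log\sec u$. Both sides are even power series vanishing at $u=0$, so it suffices to compare derivatives in $u$. The left side differentiates to $\sum_{k\geq1}E_{2k-1}u^{2k-1}/(2k-1)!$. This is the odd part of $\sec u+\tan u$ in~\eqref{eq:euldef}, namely $\tan u$. The right side differentiates to $\frac{d}{du}\log\sec u=\tan u$ as well, so the two sides agree. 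Therefore $F(t)=\sec(\sqrt t)$, which is a well-defined formal power series in $t$ because $\sec$ is even.

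The only real care needed is bookkeeping: the factor $k$ converting $1/m_k!$ into $1/z_\lambda$, and the $1/n$ normalization in~\eqref{eq:logft}. Both are handled by the choice $b_k=kE_{2k-1}/(2k)!$ above. Beyond that, no step presents a genuine obstacle.
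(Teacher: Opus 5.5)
Your proposal is correct and follows the paper's proof essentially verbatim. Like the paper, you match $A_n$ against the power-sum form of Theorem~\ref{thm:5cond}(d) to get $\log F(t)=\sum_{k\geq 1}E_{2k-1}t^k/(2k)!$, then identify this series with $\log\sec(\sqrt{t})$ via $\tan$; the paper integrates $\tan x$ from $0$ to $t$ where you differentiate both sides, which is the same argument.
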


\begin{proof}
  Comparing equations~\eqref{eq:p} and \eqref{eq:philambda}, and using
  the definition $z_\lambda = 1^{m_1}m_1!\,2^{m_2}m_2!\cdots$, shows
  that 
  $\fraka$ is a sprout sequence with seed
    $$ F(t) = \exp \left(\sum_{n\geq 1} \frac{E_{2n-1}t^n}{(2n)!}\right). $$
  By equation~\eqref{eq:euldef}, we have
    $$ \sum_{n\geq 1}\frac{E_{2n-1}x^{2n-1}}{(2n-1)!} = \tan x. $$
  Integrating both sides from 0 to $t$ gives
   \beq \sum_{n\geq 1}\frac{E_{2n-1}t^{2n}}{(2n)!} = \log \sec(t).
     \label{eq:logsec} \eeq
  Applying $\exp$ to both sides and substituting $\sqrt{t}$ for $t$
  gives $F(t)=\sec(\sqrt{t})$.
\end{proof}

We can ask about the expansion of $A_n$ in terms of bases for
$\Lambda_\rr$ other than the power sums. Theorem~\ref{thm:5cond}(c)
and equation~\eqref{eq:euldef} immediately give the monomial
expansion: 
  $$ (2n)!\,A_n = \sum_{\lambda\vdash n}\binom{2n}{2\lambda_1,
  2\lambda_2, ...}E_{2\lambda_1} 
    E_{2\lambda_2}\cdots m_\lambda. $$

\smallskip
We can interpret this monomial expansion of $(2n)!A_n$ combinatorially
as follows.

\begin{theorem} \label{thm.mexpansion}
Let $\lambda\vdash n$. Then $(2n)!\,[m_\lambda]A_n$ is equal to the
number of permutations $c_1,c_2,\dots,c_{2n}\in\fs_{2n}$ such that the
first $2\lambda_1$ terms are alternating, i.e., $c_1>c_2<c_3>\cdots
>c_{2\lambda_1}$, then the next $2\lambda_2$ terms are alternating,
then the next $2\lambda_3$ terms are alternating, etc.
\end{theorem}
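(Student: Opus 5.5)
Combining the displayed monomial expansion (Theorem~\ref{thm:5cond}(c) applied to the seed $\sec(\sqrt{t})$ of Theorem~\ref{thm:andef}) with a direct count proves the statement. First we record the coefficients of the seed. By equation~\eqref{eq:euldef}, $\sec t=\sum_{k\geq 0}E_{2k}t^{2k}/(2k)!$, because $\sec$ is even and $\tan$ is odd. Hence $\sec(\sqrt t)=\sum_{k\geq0}E_{2k}t^k/(2k)!$, so $a_k=E_{2k}/(2k)!$. Theorem~\ref{thm:5cond}(c) then gives
$$ (2n)!\,[m_\lambda]A_n=(2n)!\prod_i\frac{E_{2\lambda_i}}{(2\lambda_i)!}=\binom{2n}{2\lambda_1,2\lambda_2,\dots}E_{2\lambda_1}E_{2\lambda_2}\cdots, $$
which is the displayed monomial expansion. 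It remains to show that the right-hand side counts the permutations described in the theorem.

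For the count, fix the order of the parts $\lambda_1\geq\lambda_2\geq\cdots\geq\lambda_\ell$. This ordering is what defines the consecutive blocks of positions of lengths $2\lambda_1,2\lambda_2,\dots$; note that no division by $\prod m_k!$ occurs, because the blocks are distinguished by their positions. We build $c=c_1\cdots c_{2n}$ in two steps. First choose the set $S_j$ of values occupying the $j$th block of positions; there are $\binom{2n}{2\lambda_1,2\lambda_2,\dots}$ ways. Then, independently for each $j$, arrange $S_j$ in its block so that it is alternating in the sense $c>c<c>\cdots$. By order-isomorphism with $[2\lambda_j]$, the number of such arrangements equals the number of alternating permutations in $\fs_{2\lambda_j}$, which is $E_{2\lambda_j}$. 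This construction is clearly a bijection onto the permutations described, since $c$ determines each $S_j$ and each block's arrangement. Multiplying gives the product above.

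The main point needing care is the alternation convention. The paper defines $E_k$ by counting $a_1>a_2<a_3>\cdots$, and the theorem uses the same pattern $c_1>c_2<\cdots$ starting at the beginning of each block. Since each block has even length, every block begins with a descent, consistent with that convention. No alternation condition is imposed between the last entry of one block and the first entry of the next. I expect no genuine obstacle beyond verifying these conventions and the multinomial bookkeeping.
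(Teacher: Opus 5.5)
Your proof is correct and matches the paper's argument: choose the value sets of the blocks in $\binom{2n}{2\lambda_1,2\lambda_2,\dots}$ ways, arrange each block alternatingly in $E_{2\lambda_j}$ ways, and identify the product with $(2n)!\,a_{\lambda_1}a_{\lambda_2}\cdots$ from Theorem~\ref{thm:5cond}(c). Your explicit check that $a_k=E_{2k}/(2k)!$ (because $\sec$ is even and $\tan$ is odd) fills in a step the paper leaves implicit.
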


\begin{proof}
First choose the sets $\{c_1,c_2,\dots,c_{2\lambda_1}\}$,
$\{c_{2\lambda_1+1}, c_{2\lambda_1+2},\dots, c_{2\lambda_1+
  2\lambda_2}\}$, etc., in
  $\binom{2n}{2\lambda_1,2\lambda_2,\dots}$ ways. Then arrange the
  elements 
  of each set to be an alternating permutation in $E_{2\lambda_1}
  E_{2\lambda_2},\dots$ ways. Thus the number of permutations
  satisfying the conditions of the theorem is
   $$ \binom{2n}{2\lambda_1, 2\lambda_2, ...}E_{2\lambda_1}
     E_{2\lambda_2}\cdots, $$
  which by Theorem~\ref{thm:5cond}(c) is equal to $(2n)!\,[m_\lambda]
  A_n$. 

\end{proof}

\begin{example}
The coefficient of $m_{311}$ in $10!A_5$ is the number of permutations
$c_1, c_2, \dots,c_{10}\in\fs_{10}$ satisfying
$c_1>c_2<c_3>c_4<c_5>c_6$, $c_7>c_8$, and $c_9>c_{10}$, namely,
76860. 
\end{example}

\section{The $h$-expansion of $(2n)!A_n$}
We turn to the $h$-expansion of $A_n$.

\begin{theorem} \label{thm:hpos}
The symmetric function $(2n)!A_n$ is $h$-integral and $h$-positive.
\end{theorem}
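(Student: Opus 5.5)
The plan is to prove the two assertions separately: $h$-positivity from the Edrei--Thoma machinery already set up (Theorem~\ref{thm:ehpos}(b)), and $h$-integrality from the dual-basis formula of Theorem~\ref{thm:varphi}.

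\emph{Positivity.} By Theorem~\ref{thm:andef}, $\fraka$ is a sprout sequence with seed $F(t)=\sec(\sqrt t)$. I will put this seed in the form \eqref{eq:spos} with $\gamma=0$ and every $\alpha_j=0$. Start from the classical Weierstrass product
$$\cos z=\prod_{j\geq 1}\left(1-\frac{4z^2}{(2j-1)^2\pi^2}\right).$$
Substituting $z=\sqrt t$ and inverting gives
$$\sec(\sqrt t)=\prod_{j\geq 1}\frac{1}{1-\beta_j t},\qquad \beta_j=\frac{4}{(2j-1)^2\pi^2}\geq 0 .$$
Here $\sum_j\beta_j$ converges; in fact it equals $\frac12$, which matches $a_1=E_2/2!=\frac12$. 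The product is even in $z$, so the substitution is legitimate as an identity of power series in $t$. Theorem~\ref{thm:ehpos}(b) then shows that every $A_n$ is $h$-positive, and multiplying by $(2n)!$ preserves this.

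Alternatively, Theorem~\ref{thm:omega} shows that $\omega A_n$ has seed $1/\sec(\sqrt{-t})=\cosh\sqrt t$. This is entire with only negative real zeros, so one can argue via Theorem~\ref{thm:ehpos}(a) instead. Either way, the only analytic input is the product formula for $\cos$, which is the step needing care: we must check that it matches the hypotheses of Theorem~\ref{thm:et}, namely nonnegative parameters and a convergent sum.

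\emph{Integrality.} Apply Theorem~\ref{thm:varphi} with $b_\lambda=h_\lambda$, whose dual basis is $b^*_\lambda=m_\lambda$. This gives $[h_\lambda]A_n=\varphi(m_\lambda)$, where $\varphi(h_k)=a_k=E_{2k}/(2k)!$. For $\lambda\vdash n$, $m_\lambda$ is a $\zz$-linear combination of the $h_\mu$ with $\mu\vdash n$, because $\{h_\mu\}$ is a $\zz$-basis of the integral symmetric functions and the transition matrix is homogeneous. Hence
$$(2n)!\,[h_\lambda]A_n=\sum_{\mu\vdash n}c_{\lambda\mu}\,\binom{2n}{2\mu_1,2\mu_2,\dots}E_{2\mu_1}E_{2\mu_2}\cdots,\qquad c_{\lambda\mu}\in\zz.$$
Each multinomial coefficient here is an integer since $\sum 2\mu_i=2n$, and each $E_k$ is an integer. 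So every coefficient of $(2n)!A_n$ in the $h$-basis is an integer, which completes the proof.
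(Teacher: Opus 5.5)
Your proposal is correct and matches the paper's proof: positivity comes from the Weierstrass product $\sec(\sqrt t)=\prod_{j\ge 1}\bigl(1-4t/(\pi^2(2j-1)^2)\bigr)^{-1}$ together with Theorem~\ref{thm:ehpos}. You rightly invoke part (b); the paper's text cites (a), which is a slip. Integrality comes from the integer $m$-expansion $\binom{2n}{2\lambda_1,2\lambda_2,\dots}E_{2\lambda_1}E_{2\lambda_2}\cdots$ and the integrality of the $m$-to-$h$ transition matrix, and your route through Theorem~\ref{thm:varphi} is just the dual phrasing of that same change-of-basis argument.
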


\begin{proof}
First, $h$-integrality is clear since $(2n)!A_n$ is $m$-integral, and
the $m$-basis and $h$-basis are both integral bases (with
respect to the lattice $\Lambda_\zz$) for $\Lambda_\rr$.

\smallskip
The Weierstrass product formula for $\cos(t)$ asserts that
 $$ \cos(t)=\prod_{k\geq
  1}\left(1-\frac{4t^2}{\pi^2(2k-1)^2}\right). $$
Hence
  $$ F(t) = \sec(\sqrt{t}) = \prod_{j\geq
  1} \left(1 - \frac{4t}{\pi^2(2j-1)^2}\right)^{-1}. $$
It follows from Theorem~\ref{thm:ehpos}(a) that $A_n$ is
$h$-positive. 
\end{proof}

Although Theorem~\ref{thm:hpos} shows that $(2n)!A_n$ is
$h$-integral and $h$-positive, it gives no idea what the coefficients
are \emph{as integers} in the $h$-expansion. Here is a table of these
expansions for $1\leq n\leq 5$.

\begin{align*}
2! A_1 &= h_1  \\
4! A_2 &= h_1^2 + 4 h_2  \\
6! A_3 & = h_1^3 + 12 h_2 h_1 + 48 h_3 \\
8! A_4 &= h_1^4 + 24 h_2 h_1^2 + 256 h_3 h_1+ 16 h_2^2 + 1088 h_4 \\
10! A_5 &= h_1^5 + 40 h_2 h_1^3  + 800 h_3 h_1^2 + 80 h_2^2 h_1 + 9280 h_4
  h_1\\
    & \qquad +640 h_3 h_2 + 39680 h_5.
\end{align*}

\textbf{Problem.} Find a combinatorial interpretation of the
coefficients in the $h$-expansion of
$(2n)!\,A_n$.
(Theorem~\ref{thm:anhexp}(b) below suggests that the
coefficients should count some property of alternating permutations in
$\fs_{2n}$. This property should be indexed by partitions $\lambda$ of
$n$.) Toward this end we have the following result.

\begin{theorem} \label{thm:anhexp}
  Regarding the $h$-expansion of $(2n)!\,A_n$, we have:
  \be\item[(a)] The coefficient of $h_1^n$ is $1$.
 \item[(b)] The sum of the coefficients is $E_{2n}$.
 \item[(c)] The coefficient of $h_n$ is $nE_{2n-1}$, the number of
   \emph{cyclically alternating} permutations $w\in\frakan$, i.e., $w=
   a_1 a_2\cdots a_{2n}\in\frakan$ and $a_{2n}<a_1$.
 \item[(d)] Write $E'_{2n}=nE_{2n-1}$. Then for $i,j\geq 1$ and
   $n=i+j$,
     $$ [h_ih_j](2n)!A_n = \left\{ \begin{array}{rl}
       \binom{2n}{2i}E'_{2i}E'_{2j}-E'_{2n}, & i\neq j\\[.5em]
       \frac 12\left( \binom{2n}{n}(E'_n)^2-E'_{2n}\right), &
       i=j=n/2. 
       \end{array} \right. $$
  \ee
\end{theorem}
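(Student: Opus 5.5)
All four parts follow from Theorem~\ref{thm:special} once we know the numbers $a_n$ and $b_n$ for the seed $F(t)=\sec(\sqrt t)$. So the first step is to read them off. By equation~\eqref{eq:euldef}, $\sec t=\sum_{n\ge 0}E_{2n}t^{2n}/(2n)!$, so
$$ a_n=\frac{E_{2n}}{(2n)!}. $$
From equation~\eqref{eq:logsec} with $t$ replaced by $\sqrt t$, we get $\log F(t)=\sum_{n\ge1}E_{2n-1}t^n/(2n)!$. Comparing with equation~\eqref{eq:logft} gives
$$ b_n=\frac{nE_{2n-1}}{(2n)!}=\frac{E'_{2n}}{(2n)!}. $$
Every coefficient of $(2n)!\,A_n$ is then $(2n)!$ times the corresponding coefficient of $A_n$ supplied by Theorem~\ref{thm:special}.

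For (a), I would apply Theorem~\ref{thm:special}(c) with $k=1$, reading the exponent of $t$ there as $t^n$. Since $b_i\to b_i$ is the identity substitution, the generating function is
$$ \frac1{F(-t)}=\cos(\sqrt{-t})=\cosh(\sqrt t)=\sum_{n\ge0}\frac{t^n}{(2n)!}. $$
Hence $[h_1^n]A_n=1/(2n)!$, which gives coefficient $1$ in $(2n)!\,A_n$.

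For (b), the second statement of Theorem~\ref{thm:special}(a) says that the sum of the $h$-coefficients of a homogeneous $f$ equals $[s_n]f$. That theorem also gives $[s_n]A_n=a_n=E_{2n}/(2n)!$, so the sum of the coefficients of $(2n)!\,A_n$ is $E_{2n}$.

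For (c), the "in particular" clause of Theorem~\ref{thm:special}(c) gives $[h_n]A_n=b_n$, so $[h_n](2n)!\,A_n=nE_{2n-1}$.

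The remaining task in (c) is the combinatorial statement that $nE_{2n-1}$ counts the cyclically alternating $w\in\frakan$. This is the only genuinely new step, and the one I expect to need some care. I would argue by rotation. If $w$ is cyclically alternating, then the cyclic word $a_1a_2\cdots a_{2n}$ alternates around the circle, with peaks exactly at the odd positions. In particular the maximum $2n$ sits at one of the $n$ odd positions. Cyclic rotations by an even number of places preserve the property of being cyclically alternating. Among the $n$ even rotations of $w$, exactly one puts $2n$ in position $1$. A word beginning with $2n$ is cyclically alternating precisely when its tail $a_2a_3\cdots a_{2n}$ satisfies $a_2<a_3>a_4<\cdots<a_{2n}$; the condition $a_{2n}<a_1$ is automatic since $a_1=2n$. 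By the complementation $a\mapsto 2n-a$ on the remaining letters, there are $E_{2n-1}$ such tails. The map $w\mapsto(\text{normalized rotation},\ \text{position of }2n)$ is therefore a bijection onto $\{\text{such words}\}\times\{1,3,\dots,2n-1\}$, which gives the count $nE_{2n-1}$.

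For (d), I would substitute the values of $b_n$ into Theorem~\ref{thm:special}(d) and multiply by $(2n)!$. For $i\ne j$, the cross term becomes
$$ (2n)!\,b_ib_j=\frac{(2n)!}{(2i)!\,(2j)!}E'_{2i}E'_{2j}=\binom{2n}{2i}E'_{2i}E'_{2j}, $$
and the term $(2n)!\,b_n$ becomes $E'_{2n}$. The case $i=j=n/2$ is the same computation with the factor $\tfrac12$. The result there should read $\tfrac12\bigl(\binom{2n}{n}(E'_{n})^2-E'_{2n}\bigr)$, with $E'_n=\tfrac n2E_{n-1}$ corresponding to $2i=n$. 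I would check the indexing convention in the statement against this computation, and against the tabulated value $[h_2^2]\,8!\,A_4=16$, as a sanity check.
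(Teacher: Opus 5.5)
Your proposal is correct and follows the paper's proof. You read off $a_n=E_{2n}/(2n)!$ and $b_n=nE_{2n-1}/(2n)!$ and apply Theorem~\ref{thm:special}(a),(c),(d), and your rotation count for cyclically alternating permutations is the paper's argument with $2n$ placed first rather than last. One small slip is that the tail of a word beginning with $2n$ must satisfy $a_2<a_3>a_4<\cdots>a_{2n}$ (the last relation is $a_{2n-1}>a_{2n}$), which still gives $E_{2n-1}$ tails.
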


\begin{proof}
  \be\item[(a)] Immediate from the case $k=1$ of
  Theorem~\ref{thm:special} and the formula $\frac{1}{F(-t)} =
  \sum_{n\geq 0}\frac{t^n}{(2n)!}$.

\smallskip
  \item[(b)] Use Theorem~\ref{thm:special}(a).

\smallskip
  \item[(c)] Use Theorem~\ref{thm:special}(c). To see that $nE_{2n-1}$
    is the number of cyclically alternating permutations of $[2n]$,
    take a reverse alternating permutation $w=c_1, c_2,\dots,c_{2n-1}$
    in $\fs_{2n-1}$ (i.e., $c_1<c_2>c_3<\cdots>c_{2n-1}$) and adjoin 
    $2n$ at the end. The resulting word has $n$ cyclic shifts that are
    cyclically alternating.

\smallskip
  \item[(d)] Follows from Theorem~\ref{thm:special}(d).
  \ee
\end{proof} 

\section{The Schur expansion of $(2n)!A_n$}
We now turn to the Schur expansion (or $s$-expansion) of $A_n$. 
Given $\lambda\vdash n$, let $\mu=2\lambda'$, i.e., take the conjugate
partition $\lambda'$ to $\lambda$ and double each part. Let
$\rho(\lambda)$ be the skew partition (or skew shape) obtained from
$\lambda$ as follows: the row lengths of $\rho(\lambda)$ are the parts
of $\mu$, and each row of $\rho(\lambda)$ begins one square to the
left of the row above. In symbols, $\rho(\lambda) =
\sigma/\tau$, where $\sigma_i=2\lambda'+\ell(\lambda')-i$ and
$\tau_j=\ell(\lambda')-j$.

\begin{example}
Let $\lambda=(5,3,1,1)\vdash 10$. Then $\lambda'=(4,2,2,1,1)$,
$\mu=(8,4,4,2,2)$, and $\rho(5,3,1,1)=(12,7,6,3,2)/(4,3,2,1)$, as
illustrated in Figure~\ref{fig:skew}.

\begin{figure}
  \centering
 \centerline{\includegraphics[width=6cm]{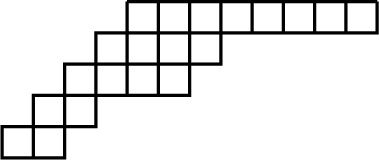}}  
 \caption{The skew shape $\rho(5,3,1,1)$}
 \label{fig:skew}
\end{figure}

\end{example}

\begin{theorem}
  For any $\lambda\vdash n$ we have $\langle (2n)!A_n,s_\lambda\rangle =
 f^{\rho(\lambda)}$, the number of standard Young tableaux of the skew
 shape $\rho(\lambda)$.
\end{theorem}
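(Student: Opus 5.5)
The plan is to reduce the statement to a determinant identity by combining Corollary~\ref{cor:schur} with Theorem~\ref{thm:omega}, and then to recognize that determinant through Aitken's formula for the number of standard Young tableaux of a skew shape.

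The point of passing through $\omega$ is that the dual seed is much simpler. By Theorem~\ref{thm:omega}, $(\omega A_0,\omega A_1,\dots)$ is a sprout sequence with seed
\[
  \frac{1}{F(-t)}=\frac{1}{\sec(\sqrt{-t})}=\cos(i\sqrt{t})=\cosh(\sqrt t)=\sum_{n\geq 0}\frac{t^n}{(2n)!}.
\]
This is the same series already used in the proof of Theorem~\ref{thm:anhexp}(a). Since $\omega$ is an isometry and $\omega s_\lambda=s_{\lambda'}$, we have $\langle A_n,s_\lambda\rangle=\langle \omega A_n,s_{\lambda'}\rangle$. Corollary~\ref{cor:schur}, applied to the seed $\sum_k t^k/(2k)!$, therefore gives
\[
  \langle (2n)!A_n,s_\lambda\rangle=(2n)!\,\det\left[\frac{1}{(2(\lambda'_i-i+j))!}\right]_{i,j=1}^{\ell(\lambda')},
\]
with the convention $1/m!=0$ for $m<0$.

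Next I invoke Aitken's determinantal formula for skew standard Young tableaux (see e.g.\ \cite[Cor.~7.16.3]{ec2}). For a skew shape $\sigma/\tau$ with $N$ squares and $\ell$ rows, it states
\[
  f^{\sigma/\tau}=N!\,\det\left[\frac{1}{(\sigma_i-i-\tau_j+j)!}\right]_{i,j=1}^{\ell}.
\]
Take $\ell=\ell(\lambda')$, $\sigma_i=2\lambda'_i+\ell-i$ and $\tau_j=\ell-j$. Then
\[
  \sigma_i-i-\tau_j+j=2\lambda'_i+\ell-2i-\ell+2j=2(\lambda'_i-i+j),
\]
and $N=\sum_i(\sigma_i-\tau_i)=2\sum_i\lambda'_i=2n$. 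The two determinants therefore coincide entry by entry, including the zero entries coming from negative arguments. This yields $\langle (2n)!A_n,s_\lambda\rangle=f^{\rho(\lambda)}$.

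The only remaining check is that $\rho(\lambda)=\sigma/\tau$ is a legitimate skew shape, so that Aitken's formula applies. Here $\tau$ is the staircase, which is clearly a partition. Moreover $\sigma_i-\sigma_{i+1}=2(\lambda'_i-\lambda'_{i+1})+1\geq 1$, and $\sigma_i\geq\tau_i$, so $\sigma$ is a partition containing $\tau$. One should also confirm that this $\sigma/\tau$ is exactly the shape described in words, with rows of lengths $2\lambda'_i$ each beginning one square to the left of the row above. This is immediate, since $\tau_i=\ell-i$ decreases by one from row to row.

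I expect no serious obstacle. The only real insight is to dualize via $\omega$ to the seed $\cosh\sqrt{t}$, whose coefficients are reciprocal factorials, rather than to work directly with $a_k=E_{2k}/(2k)!$. Doing the latter would produce a determinant in Euler numbers and would require a much harder zigzag-tableau argument.
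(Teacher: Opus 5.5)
Your proposal is correct and is essentially the paper's proof: pass to $\omega A_n$ via Theorem~\ref{thm:omega} to get the seed $\sum_{n\geq 0}t^n/(2n)!$, then apply Corollary~\ref{cor:schur} at $\lambda'$, and finally recognize the resulting determinant as Aitken's formula \cite[Cor.~7.16.3]{ec2} for $\sigma/\tau$. Your additional checks fill in details the paper leaves implicit: that $\sigma/\tau$ is a genuine skew shape with $2n$ squares matching the verbal description of $\rho(\lambda)$, and the reading $\sigma_i=2\lambda'_i+\ell(\lambda')-i$, which corrects the paper's typo.
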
  

\begin{proof}
  By Theorem~\ref{thm:omega} we have that $(1,2!\omega(A_1),
  4!\omega(A_2),6!\omega(A_3),\dots)$ is a sprout sequence with seed
  $$ \frac{1}{\sec{\sqrt{-t}}} = \sum_{n\geq 0}\frac{t^n}{(2n)!}. $$   
  By Corollary~\ref{cor:schur}, 
  $$ \langle (2n)!\omega(A_n),s_\lambda\rangle =(2n)!
    \det\left[ \frac{1}{(2\lambda_i-2i+2j)!} \right]_{i,j=1}^n.
  $$
  Hence
   \bea \langle (2n)!A_n,s_\lambda\rangle  & = & (2n)! 
    \det\left[ \frac{1}{(2\lambda_i'-2i+2j)!}
      \right]_{i,j=1}^{\ell(\lambda'
       )} \nonumber\\ & = & 
    (2n)!\det\left[ \frac{1}{(\sigma_i-\tau_j-i+j)!}
      \right]_{i,j=1}^{\ell(\lambda')}. \label{eq:dualjt}   
    \eea
   By \cite[Cor.~7.16.3]{ec2}, the right-hand side of
   equation~\eqref{eq:dualjt} is equal to $f^{\rho(\lambda)}$, and the
   proof follows.
\end{proof}

\section{A connection with chromatic symmetric functions and interval
  orders} \label{sec:chrosym}  

To each perfect matching $M$ on vertex set $[2n]$ we associate a
partially ordered set $P_M$ by declaring that $\{a,b\}<_{P_M}\{c,d\}$
if $\max\{a,b\}<\min\{c,d\}$.  The posets $P_M$ are interval orders:
identify each edge $\{a,b\}$ ($a<b$) with the closed interval $[a,b]$
on the real line, and declare $\{a,b\}<\{c,d\}$ if $[a,b]$ lies
entirely to the left of $[c,d]$.  We write $\inc(P)$ for the
incomparability graph of a poset $P$ and $X_G$ for the chromatic
symmetric function of a graph $G$ (see \cite{csf}). 

\begin{theorem} \label{thm.uio}
Given a positive integer $n$, let
${\cM(2n)}$ be the set of all perfect matchings on the
set $[2n]$. Then 
$$
(2n)!A_n=\sum_{M \in \cM(2n)}\omega X_{\inc(P_M)}.
$$ 
\end{theorem}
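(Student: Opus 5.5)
The plan is to compare monomial expansions and use Theorem~\ref{thm:5cond}(c). By Theorem~\ref{thm:andef} and Theorem~\ref{thm.mexpansion},
\[ [m_\lambda]\,(2n)!A_n=\binom{2n}{2\lambda_1,2\lambda_2,\dots}E_{2\lambda_1}E_{2\lambda_2}\cdots . \]
So it suffices to show that the right-hand side of Theorem~\ref{thm.uio} has the same coefficient of $m_\lambda$.

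For $\omega X_G$ I would use Stanley's acyclic-orientation expansion from \cite{csf}. It says that $\omega X_G$ is the sum of $x^\kappa$ over pairs $(\mathfrak{o},\kappa)$, where $\mathfrak{o}$ is an acyclic orientation of $G$ and $\kappa\colon V\to\pp$ is weakly compatible with $\mathfrak{o}$, meaning $u\to v$ implies $\kappa(u)\geq\kappa(v)$. Taking the coefficient of $x_1^{\lambda_1}x_2^{\lambda_2}\cdots$ gives the following description. Choose the color classes $V_1,V_2,\dots$ with $\#V_i=\lambda_i$. Edges between different classes are then forced to point from the larger color to the smaller. Edges inside a class may be oriented arbitrarily, provided the orientation is acyclic on that class. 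Any cycle must lie within one class, since the colors along a directed path weakly decrease. Hence
\[ [m_\lambda]\,\omega X_G=\sum_{(V_1,V_2,\dots)}\prod_i a(G|_{V_i}), \]
where $a(H)$ denotes the number of acyclic orientations of $H$.

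Next, sum over $M\in\cM(2n)$ with $G=\inc(P_M)$, so the vertices of $G$ are the $n$ edges of $M$. A choice of $M$ together with an ordered partition of its edges into blocks of sizes $\lambda_1,\lambda_2,\dots$ is the same as two successive choices. First choose an ordered set partition of $[2n]$ into sets $U_i$ with $\#U_i=2\lambda_i$; there are $\binom{2n}{2\lambda_1,2\lambda_2,\dots}$ ways. Then choose a perfect matching of each $U_i$. After relabeling $U_i$ order-preservingly by $[2\lambda_i]$, the induced subposet of $P_M$ on block $i$ is exactly $P_{M_i}$ for a perfect matching $M_i$ of $[2\lambda_i]$. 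Its incomparability graph is the induced subgraph, because both relations only compare endpoints. The sum therefore factors, and the theorem reduces to the key identity
\[ \sum_{M\in\cM(2k)} a(\inc(P_M)) = E_{2k}\qquad(k\geq 0). \]

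This identity is the main obstacle. I would prove it bijectively. Given $M$ and an acyclic orientation $\mathfrak{o}$ of $\inc(P_M)$, define a relation on the edges of $M$ by taking $e\prec f$ when $e<_{P_M}f$, or when $e,f$ are incomparable and $\mathfrak{o}$ points $e\to f$. First I would show that this relation is acyclic, using the interval-order structure (no induced $\mathbf{2}+\mathbf{2}$). Then I would extract a canonical linear order $e_1,\dots,e_k$ from it, for instance by repeatedly taking the $\prec$-minimal edge with largest maximum. From this order I would read off the word $\max e_1,\min e_1,\max e_2,\min e_2,\dots$. By construction it satisfies $a_1>a_2$, $a_3>a_4$, and so on. The real work is to choose the canonical extension so that the ascents $a_{2i}<a_{2i+1}$ also hold, and so that the map is invertible: from an alternating $w$ we recover $M=\{\{a_1,a_2\},\{a_3,a_4\},\dots\}$ and orient incomparable pairs by their order of appearance in $w$.

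If this bijection resists, the fallback is a recursive proof. I would verify that both sides satisfy the Euler recurrence $2E_{2k}=\sum_j\binom{2k}{j}E_jE_{2k-j}$, or equivalently the differential equation for $\sec$. To do this, condition on the edge of $M$ containing $1$ and use deletion--contraction $a(G)=a(G-e)+a(G/e)$. As a sanity check, for $k=2$ the three matchings contribute $1+2+2=5=E_4$.
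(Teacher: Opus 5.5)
Your reduction is sound. Stanley's acyclic-orientation formula does give $[m_\lambda]\,\omega X_G=\sum_{(V_1,V_2,\dots)}\prod_i a(G|_{V_i})$. Regrouping over ordered set partitions $(U_1,U_2,\dots)$ of $[2n]$ then correctly shows that $\frac{1}{(2n)!}\sum_{M\in\cM(2n)}\omega X_{\inc(P_M)}$ is itself a sprout sequence, with seed $\sum_k c_k t^k/(2k)!$, where $c_k=\sum_{M\in\cM(2k)}a(\inc(P_M))$.

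The gap is what remains. The identity $c_k=E_{2k}$ is precisely the $m_{(k)}$-coefficient of the theorem itself for $n=k$, so the real content is still ahead of you. The bijection you sketch for it fails at its first step: the relation $\prec$ (comparabilities of $P_M$ together with $\mathfrak o$) is \emph{not} acyclic in general, even for interval orders. Take $M=\{\{1,6\},\{2,3\},\{4,5\}\}$ and put $x=\{2,3\}$, $y=\{4,5\}$, $z=\{1,6\}$. Then $x<_{P_M}y$ and $z$ is incomparable to both, so $\inc(P_M)$ is the path $x-z-y$. The orientation $y\to z\to x$ is acyclic, yet $x\prec y\prec z\prec x$, so $\prec$ has no linear extension at all here. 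This pair is not exotic: your (correct) inverse map sends the alternating permutation $546132$ to exactly this $(M,\mathfrak o)$, and that permutation lists $y$ before $x$. So the ordering you must recover is in general \emph{not} a linear extension of $P_M$. You also misplace the difficulty: for an ordering that does extend $P_M$, the ascents $a_{2i}<a_{2i+1}$ are automatic, since $a_{2i}>a_{2i+1}$ means precisely $e_{i+1}<_{P_M}e_i$.

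What you actually need is this: among the orderings of the edges of $M$ that induce $\mathfrak o$ on incomparable pairs, exactly one has no consecutive $P_M$-descent $e_{i+1}<_{P_M}e_i$. Existence is easy. Repeatedly take the $\mathfrak o$-source with the smallest maximum; this uses $\mathfrak o$ alone and never creates a consecutive $P_M$-descent. Uniqueness needs a genuine argument, for example showing that any such ordering is the lexicographically least one in its commutation class.

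Your fallback does not supply this either. Deletion and contraction do not preserve the family $\{\inc(P_{M'})\}$, so conditioning on the edge through $1$ has no evident way to close up. Also, the recurrence as you wrote it, $\sum_j\binom{2k}{j}E_jE_{2k-j}$, equals $2E_{2k+1}$, not $2E_{2k}$.

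For contrast, the paper avoids acyclic orientations entirely. Chow's expansion $X_{\inc(P_M)}=\sum_{\sigma\in S_M}L_{\DES(\sigma),n}$ gives $\omega X_{\inc(P_M)}=\sum_{\sigma}L_{\ASC(\sigma),n}$. The injection $\iota(\sigma)=a_1b_1\cdots a_nb_n$ onto permutations with ascents at all odd positions then handles every $\lambda$ simultaneously. The correspondence you are missing is encoded in how $\omega$ acts on fundamental quasisymmetric functions.
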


\begin{proof}
Given $M \in \cM(2n)$, let $S_M$ be the set of ordered lists
$\sigma=\sigma_1\sigma_2\ldots\sigma_n$ of the elements of $P_M$.
Each $\sigma \in S_M$ has descent set 
$$
\DES(\sigma):=\{i \in [n-1]:\sigma_i \not\leq_{P_M}\sigma_{i+1}\}
$$
and ascent set $\ASC(\sigma):=[n-1]\setminus \DES(\sigma)$.  Given $T
\subseteq [n-1]$, we write $L_{T,n}$ for the associated fundamental
quasisymmetric function and $M_{T,n}$ for the associated monomial
quasisymmetric function (see for example \cite[Section 7.19]{ec2} for
definitions).  By \cite[Corollary 2]{chow}, 
\begin{eqnarray*}
X_{\inc(P_M)} & = & \sum_{\sigma \in S_M} L_{\DES(\sigma),n} \\ & = &
\sum_{\sigma \in S_M}\sum_{\DES(\sigma) \subseteq T \subseteq
  [n-1]}M_{T,n}, 
\end{eqnarray*}
the second equality following from \cite[Theorem 7.19.1]{ec2}.

\smallskip
By \cite[Exercise 7.94]{ec2},
$$
\omega X_{\inc(P_M)}=\sum_{\sigma \in S_M}\sum_{\ASC(\sigma)\subseteq
  T \subseteq [n-1]}M_{T,n}. 
$$

We define
$$
\iota:\bigcup_{M \in \cM(2n)}S_M \rightarrow \fs_{2n}
$$
as follows.  If $\sigma=\sigma_1\ldots\sigma_n \in S_M$ with
$\sigma_i=\{a_i,b_i\}$ and $a_i<b_i$ for each $i \in [n]$, then 
$$
\iota(\sigma):=a_1b_1a_2b_2\ldots a_nb_n.
$$

Writing $\ASC(w)$ for the usual ascent set of $w \in \fs_{2n}$, we observe that 
$$
\ASC(\iota(\sigma))=\{1,3,5,\ldots,2n-1\} \cup \{2j:j \in \ASC(\sigma)\}. 
$$
Moreover, $\iota$ is injective with image

$$
\iota\left(\bigcup_{M \in \cM(2n)}S_M\right)=\{w \in
\fs_{2n}:\{1,3,5,\ldots,2n-1\} \subseteq \ASC(w)\}. $$

For a partition $\lambda=(\lambda_1,\ldots,\lambda_\ell)$ of $n$ and
$j \in [\ell-1]$, we set 
$$
v_j(\lambda):=\sum_{i=1}^j\lambda_i
$$
and define
$$
T(\lambda):=\{v_j(\lambda):j \in [\ell-1]\}.
$$

We observe now that the coefficient of $m_\lambda$ in the monomial
symmetric expansion of $\sum_{M \in \cM(2n)}\omega X_{\inc(P_M)}$ is
the same as the coefficient of $M_{T(\lambda),n}$ in the monomial
quasisymmetric expansion.  We have shown that this second coefficient
is the number of $w \in \fs_{2n}$ satisfying 
$$
\{1,3,5,\ldots,2n-1\} \subseteq \ASC(w) \subseteq
\{1,3,5,\ldots,2n-1\} \cup \{2v_j(\lambda):j \in [\ell-1]\}, 
$$
which is equal to the number of $w \in \fs_{2n}$ satisfying
$$
\{1,3,5,\ldots,2n-1\} \subseteq \DES(w) \subseteq
\{1,3,5,\ldots,2n-1\} \cup \{2v_j(\lambda):j \in [\ell-1]\}. 
$$
The proof now follows from Theorem~\ref{thm.mexpansion}. 
\end{proof}

We remark that even though $\sum_{M \in \cM(2n)}\omega X_{\inc(P_M)}$
is $h$-positive, there are $M$ such that $X_{\inc(P_M)}$ is not Schur
positive.  One example is
$M=\left\{\{1,8\},\{2,3\},\{4,5\},\{6,7\}\right\}$, for which $P_M$ is
the complete bipartite graph $K_{1,3}$.  See \cite[p. 186]{csf}. 

Theorem~\ref{thm.uio} suggests the following general question: are
there other ``interesting'' sums (or more generally, linear
combinations) of chromatic symmetric functions that are $e$-positive?


\begin{thebibliography}{99}
%
 \bibitem{a-g-o} T. Amdeberhan, M. Griffin, and K. Ono, Some
   topological genera and Jacobi forms,  \emph{Proc.\ Nat.\ Acad.\ 
   Sci., USA} \textbf{122} (2025), e2502678122.
%
 \bibitem{a-o-s} T. Amdeberhan, K. Ono, and A. Singh, Derivatives of
   theta functions as traces of partition Eisenstein series,
   \emph{Nagoya Math.\ J.}\ \textbf{258} (2025), 284--295.
%
 \bibitem{chow} T. Y. Chow, Descents, quasi-symmetric functions, 
 Robinson-Schensted for posets, and the chromatic symmetric function, 
 \emph{J. Algebraic Combin.} {\bf 10} (1999), no. 3, 227-240.
%
 \bibitem{edrei} A. Edrei, On the generating function of totally
   positive sequences II, \emph{J. Anal.\ Math.}\ \textbf{88} (1952),
   104--109.
%
 \bibitem{ehr} R. Ehrenborg, On posets and Hopf algebras,
    \emph{Adv.\ Math.}\ \textbf{119} (1996), no. 1, 1--25.
%
 \bibitem{gao} Y. Gao, J. Guo, K. Seetharaman, and I. Seidel, The
   rank-generating function of upho posets, \emph{Discrete
     Math.}\ \textbf{345} (2022), 112629.
%
 \bibitem{noncomm} I.\,M.\ Gelfand, D. Krob, A. Lascoux, B. Leclerc,
   V.\,S.\ Retakh, and J.-Y.\ Thibon, Noncommutative symmetric
   functions, \emph{Adv.\ Math} \textbf{112}, (1995), no. 2,
   218--348.
%
 \bibitem{hitchin} N. Hitchin, The Dirac operator,
   \emph{Bull.\ Amer.\ Math.\ Soc.}\ \textbf{62} (2025), 3--16.
%
 \bibitem{jordan} Ch. Jordan, \emph{Calculus of Finite Differences},
 R\"ottig and Romwalter, Budapest, 1939.

%
 \bibitem{lit1} D.\,E.\ Littlewood and A.\,R.\ Richardson, Immanants
   of some special matrices,
   \emph{Quart.\ J. Math.}\ \textbf{5} (1934), 269--282.
%
 \bibitem{lit2} D.\,E.\ Littlewood and A.\,R.\ Richardson, Some
   special S-functions and $q$-series,
   \emph{Quart.\ J. Math.}\ \textbf{6} (1935), 184--198.
%
  \bibitem{lit} D.\,E.\ Littlewood, \emph{The Theory of Group
    Characters and Matrix Representations of Groups}, second ed.,
    Oxford University Press, London, 1950; reprinted 1958.  
  \bibitem{macd} I.\,G.\ Macdonald, \emph{Symmetric Functions and
    Hall Polynomials}, second ed., Clarendon Press, Oxford, 1995.
%
%
  \bibitem{rs:bp} R. Stanley, Binomial posets, M\"obius inversion, and
    permutation enumeration, \emph{J. Combinatorial Theory, Ser. A}
    \textbf{20} (1976), 336--356.
%
\bibitem{csf} R. Stanley, A symmetric function generalization of the 
chromatic polynomial of a graph, \emph{Adv.\ Math.} \textbf{111} (1995), 
no. 1, 166-194
 \bibitem{rs:garsia} R. Stanley, Graph colorings and related symmetric
    functions: ideas and applications, \emph{Discrete
      Math.}\ \textbf{193} (1998), 267--286.
%
 \bibitem{ec1} R. Stanley, \emph{Enumerative Combinatorics}, vol.\ 1,
   second edition, Cambridge University Press, New York/Cam\-bridge
   2012.      
%
 \bibitem{ec2} R. Stanley, \emph{Enumerative Combinatorics}, vol.\ 2,
  second edition,  Cambridge University Press, New York/Cam\-bridge,
  2023.
%
 \bibitem{rs:rat} R. Stanley, Theorems and conjectures on some
   rational generating functions,
   \emph{Europ.\ J. Comb.}\ \textbf{119} (2023), 113359.
%
 \bibitem{thoma} E. Thoma, Die unzerlegbaren, positiv-definiten
   Klassenfunktionen der abzahlbar unendlichen, symmetrischen Gruppe,
   \emph{Math.\ Z.}\ \textbf{85} (1964), 40--61. 
%
\end{thebibliography}
\end{document}